\titleclass{\part}{top}
\titleformat{\part}[display]
{\huge\bfseries\centering}{\partname~\thepart}{0pt}{}
\titlespacing*{\part}{0pt}{160pt}{40pt}
\newcommand{\R}{\mathbb{R}}
\newcommand{\N}{\mathbb{N}}
\newcommand{\E}{\mathbb{E}}
\renewcommand{\epsilon}{\varepsilon}
\newcommand{\1}{\mathds{1}}
\newcommand{\mathbbm}[1]{\1}
\title{Martingale Wasserstein inequality for probability measures in the convex order}
\author{B. Jourdain\thanks{CERMICS, Ecole des Ponts, INRIA, Marne-la-Vallée, France. E-mails: benjamin.jourdain@enpc.fr, william.margheriti@enpc.fr - This research benefited from the support of the “Chaire Risques Financiers”, Fondation du Risque.} \and W. Margheriti\footnotemark[1]}
\date{\today}
\theoremstyle{plain}
\newtheorem{prop2}{Proposition}
\newtheorem{lemma}{Lemma}
\def\th@remark{%
	\thm@headfont{\itshape\bfseries}%
	\normalfont 
	\thm@preskip\topsep \divide\thm@preskip\tw@
	\thm@postskip\thm@preskip
}
\theoremstyle{remark}
\newtheorem{rk}{Remark}
\newtheorem{Ex}{Example}
\newtheorem*{def2}{Definition}
\begin{document}
	\maketitle
	
	\begin{abstract} It was shown by the authors that two one-dimensional probability measures in the convex order admit a martingale coupling with respect to which the integral of $\vert x-y\vert$ is smaller than twice their $\mathcal W_1$-distance (Wasserstein distance with index $1$). We showed that replacing $\vert x-y\vert$ and $\mathcal W_1$ respectively with $\vert x-y\vert^\rho$ and $\mathcal W_\rho^\rho$ does not lead to a finite multiplicative constant. We show here that a finite constant is recovered when replacing $\mathcal W_\rho^\rho$ with the product of $\mathcal W_\rho$ times the centred $\rho$-th moment of the second marginal to the power $\rho-1$. Then we study the generalisation of this new martingale Wasserstein inequality to higher dimension.
	\end{abstract}
	
	\bigskip
	
	{\bf Keywords:} Convex order, Martingale Optimal Transport, Wasserstein distance, Martingale couplings.

\section{Introduction}

For all $d\in\N^*$, let ${\cal P}(\R^d)$ denote the set of probability measures on $\R^d$ and for $\rho\ge1$, let  ${\cal P}_\rho(\R^d)$ denote the subset of probability measures with finite $\rho$-th moment. For $\mu,\nu\in\mathcal P_\rho(\R^d)$, we define the Wasserstein distance with index $\rho$ by 
\[
\mathcal W_\rho(\mu,\nu)=\left(\inf_{P\in\Pi(\mu,\nu)}\int_{\R^d\times\R^d}\vert x-y\vert^\rho\,P(dx,dy)\right)^{1/\rho},
\]
where $\Pi(\mu,\nu)$ denotes the set of couplings between $\mu$ and $\nu$, that is 
\[
\Pi(\mu,\nu)=\{P\in\mathcal P_\rho(\R^d\times\R^d)\mid\forall A\in\mathcal B(\R^d),\ P(A\times\R^d)=\mu(A)\ \mathrm{and}\ P(\R^d\times A)=\nu(A)\}.\]

Let $\Pi^{\mathrm{M}}(\mu,\nu)$ be the set of martingale couplings between $\mu$ and $\nu$, that is 
\[
\Pi^{\mathrm{M}}(\mu,\nu)=\left\{M\in\Pi(\mu,\nu)\mid\mu(dx)\text{-a.e.},\ \int_{\R^d}y\,m(x,dy)=x\right\},
\]
where for all $M\in\Pi(\mu,\nu)$, $(m(x,dy))_{x\in\R}$ denotes a regular conditional probability distribution of $M$ with respect to $\mu$. The celebrated Strassen theorem \cite{St65} ensures that if $\mu,\nu\in\mathcal P_1(\R^d)$, then $\Pi^{\mathrm M}(\mu,\nu)\neq\emptyset$ iff $\mu$ and $\nu$ are in the convex order. We recall that two probability measures $\mu,\nu\in\mathcal P_1(\R^d)$ are in the convex order, and denote $\mu\le_{cx}\nu$, if 
\begin{equation}\label{def:convexOrder}
\int_{\R^d}f(x)\,\mu(dx)\le\int_{\R^d}f(y)\,\nu(dy),
\end{equation}
for any convex function $f:\R^d\to\R$. 
For all $\rho\ge1$ and $\mu,\nu\in\mathcal P_\rho(\R^d)$, we define $\mathcal M_\rho(\mu,\nu)$ by
\[
\mathcal M_\rho(\mu,\nu)=\left(\inf_{M\in\Pi^{\mathrm{M}}(\mu,\nu)}\int_{\R^d\times\R^d}\vert x-y\vert^\rho\,M(dx,dy)\right)^{1/\rho}.
\]

Notice that when $\R^d$ is endowed with the Euclidean norm, the martingale property $\int_{\R^d}\langle x,y\rangle\,M(dx,\newline dy)=\int_{\R^d}\vert x\vert^2\,\mu(dx)$ valid for any martingale coupling $M\in\Pi^{\textrm M}(\mu,\nu)$ yields the remarkable property that $\mathcal M_2(\mu,\nu)$ depends only on the marginals, namely
\begin{equation}\label{expressionM2}
\mathcal M_2^2(\mu,\nu)=\int_{\R^d}\vert y\vert^2\,\nu(dy)-\int_{\R^d}\vert x\vert^2\,\mu(dx)=\int_{\R^d}\vert y-c\vert^2\,\nu(dy)-\int_{\R^d}\vert x-c\vert^2\,\mu(dx),
\end{equation}
for each $c\in\R^d$.

It was shown in \cite{JoMa18} that if $\mu$ and $\nu$ are in the convex order and close to each other, then there exists a martingale coupling which expresses this proximity:
\begin{equation}\label{inegaliteRecherchee2}
\forall \mu,\nu\in\mathcal P_1(\R)\mbox{ such that }\mu\le_{cx}\nu,\quad\mathcal M_1(\mu,\nu)\le 2\mathcal W_1(\mu,\nu),
\end{equation}
where the constant $2$ is sharp. We call this inequality which measures the impact of the restriction to martingale couplings in the minimisation problem defining the Wasserstein distance a martingale Wasserstein inequality. 
It was proved by exhibiting for all $\mu,\nu\in\mathcal P_1(\R)$ in the convex order a subset $\mathcal Q$ of two dimensional probability measures on the unit square and a family $(M^Q)_{Q\in\mathcal Q}$ of martingale couplings between $\mu$ and $\nu$ such that for all $Q\in\mathcal Q$, $\int_{\R\times\R}\vert y-x\vert\,M^Q(dx,dy)\le2 \mathcal W_1(\mu,\nu)$. A particular martingale coupling stands out from the latter family: the so called inverse transform martingale coupling. This coupling is explicit in terms of the cumulative distribution functions of the marginal distributions and their left-continuous generalised inverses. It is more explicit than the left-curtain (and right-curtain) coupling introduced by Beiglböck and Juillet \cite{BeJu16} and which under the condition that $\nu$ has no atoms and the set of local maximal values of $F_\nu-F_\mu$ is finite can be explicited according to Henry-Labordère and Touzi \cite{HeTo13} by solving two coupled ordinary differential equations starting from each right-most local maximiser. Many properties of the inverse transform martingale coupling and the family from which it derives are discussed in \cite{JoMa18}. In this paper, we prove a more general martingale Wasserstein inequality:
\begin{equation}\label{newStabilityInequality}
\forall\rho\ge1,\quad\exists C\in\R_+^*,\quad\forall\mu,\nu\in\mathcal P_\rho(\R)\mbox{ such that }\mu\le_{cx}\nu,\quad\mathcal M_\rho^\rho(\mu,\nu)\le C\mathcal W_\rho(\mu,\nu)\sigma_\rho^{\rho-1}(\nu),
\end{equation}
where the centred moment $\sigma_\rho(\eta)$ of order $\rho$ of $\eta\in\mathcal P_\rho(\R^d)$ is defined by
\[\sigma_\rho(\eta)=\min_{c\in\R^d}\left(\int_{\R^d}\vert y-c\vert^\rho\,\eta(dy)\right)^{1/\rho}.
\]

For all $\rho\ge1$, let $C_\rho$ denote the optimal constant $C$ in \eqref{newStabilityInequality}, that is
\begin{equation}\label{defCrho}
C_\rho=\inf\left\{C>0\mid\forall\mu,\nu\in\mathcal P_\rho(\R)\mbox{ such that }\mu\le_{cx}\nu,\ \mathcal M_\rho^\rho(\mu,\nu)\le C\mathcal W_\rho(\mu,\nu)\sigma_\rho^{\rho-1}(\nu)\right\}.
\end{equation}

One readily notices that \eqref{inegaliteRecherchee2} is a particular case of \eqref{newStabilityInequality} for $\rho=1$ and $C=2$. Moreover, since $2$ is sharp for \eqref{inegaliteRecherchee2}, we have $C_1=2$. One can also obtain that $C_2=2$ when $\R^d$ is endowed with the Euclidean norm with simple arguments which hold in general dimension and actually lead us to generalise \eqref{inegaliteRecherchee2} into \eqref{newStabilityInequality}. Indeed, let $\mu,\nu\in\mathcal P_2(\R^d)$ be such that $\mu\le_{cx}\nu$ and $\pi\in\Pi(\mu,\nu)$ be optimal for $\mathcal W_2(\mu,\nu)$. Then by \eqref{expressionM2}, the martingale property and the Cauchy-Schwarz inequality, we have
\begin{align*}
\begin{split}\label{casRhoEgal2}
\mathcal M_2^2(\mu,\nu)&=\int_{\R^d\times\R^d}\left(\vert y-c\vert^2-\vert x-c\vert^2\right)\,\pi(dx,dy)\\
&=\int_{\R^d\times\R^d}\langle y-x,y-c+x-c\rangle\,\pi(dx,dy)\le\int_{\R^d\times\R^d}\vert y-x\vert\vert y-c+x-c\vert\,\pi(dx,dy)\\
&\le\mathcal W_2(\mu,\nu)\sqrt{\int_{\R^d\times\R^d}\vert y-c+x-c\vert^2\,\pi(dx,dy)}.
\end{split}
\end{align*}
By Jensen's inequality and the definition of the convex order, the integral in the square root is bounded from above by
$$2\int_{\R^d\times\R^d}\left(\vert y-c\vert^2+\vert x-c\vert^2\right)\,\pi(dx,dy)\le 4\int_{\R^d\times\R^d}\vert y-c\vert^2\nu(dy),$$
where the right-hand side is minimal and equal to $4\sigma^2_2(\nu)$ for $c$ equal to the common mean of $\mu$ and $\nu$ so that $\mathcal M_2^2(\mu,\nu)\le2\mathcal W_2(\mu,\nu)\sigma_2(\nu)$ and $C_2\le 2$. Note that this inequality is prefered to the also derived, sharper but more complex $\mathcal M_2^2(\mu,\nu)\le\mathcal W_2(\mu,\nu)\sqrt{2(\sigma^2_2(\nu)+\sigma^2_2(\nu))}$, since in the limit $\mathcal W_2(\mu,\nu)\to 0$ where the martingale Wasserstein inequality is particularly interesting, $\sigma^2_2(\mu)$ goes to $\sigma_2^2(\nu)$.

On the other hand, for all $n\in\N^*$, let $\mu_n$ be the centred Gaussian distribution with variance $n^2$. Then we get that $\mathcal M_2(\mu_n,\mu_{n+1})=\sqrt{2n+1}$. It is well known (see for instance Remark 2.19 (ii) Chapter 2 \cite{Vi1}) that for all $\rho\ge1$ and $\mu,\nu\in\mathcal P_\rho(\R)$,
\begin{equation}
\mathcal W_\rho(\mu,\nu)=\left(\int_0^1\left\vert F_\mu^{-1}(u)-F_\nu^{-1}(u)\right\vert^\rho\,du\right)^{1/\rho},
\end{equation}
where we denote by $F_\eta(x)=\eta((-\infty,x]),x\in\R$ and $F_\eta^{-1}(u)=\inf\{x\in\R\mid F_\eta(x)\ge u\},u\in(0,1)$, the cumulative distribution function and the quantile function of a probability measure $\eta$ on $\R$. Therefore, for $G\sim\mathcal N_1(0,1)$, $\mathcal W_2(\mu_n,\mu_{n+1})=(\int_0^1\vert nF_{\mu_1}^{-1}(u)-(n+1)F_{\mu_{1}}^{-1}(u)\vert^2\,du)^{1/2}=\E[\vert G\vert^2]^{1/2}=1$. We deduce that for all $n\in\N^*$, $2n+1\le C_2\sqrt{(n+1)^2}$, which implies for $n\to+\infty$ that $C_2\ge2$. Hence $C_2=2$.

The generalisation of the martingale Wasserstein inequality \eqref{inegaliteRecherchee2} is motivated by the resolution of the Martingale Optimal Transport (MOT) problem introduced by Beiglböck, Henry-Labordère and Penkner \cite{BeHePe12} in a discrete time setting, and Galichon, Henry-Labordère and Touzi \cite{GaHeTo13} in a continuous time setting. For adaptations of celebrated results on classical
optimal transport theory to the MOT problem, we refer to Beiglböck and Juillet \cite{BeJu16}, Henry-Labordère, Tan and Touzi \cite{HeTato} and Henry-
Labordère and Touzi \cite{GaHeTo13}. On duality, we
refer to Beiglböck, Nutz and Touzi \cite{BeNuTo16}, Beiglböck, Lim and Obłój \cite{BeLiOb} and De March \cite{De18}. We also refer to De March \cite{De18b} and De March and Touzi \cite{DeTo17} for the multi-dimensional case.

About the numerical resolution of the MOT problem, one can look at Alfonsi, Corbetta and
Jourdain \cite{AlCoJo17a, AlCoJo17b}, De March \cite{De18c}, Guo and Obłój \cite{GuOb} and Henry-Labordère \cite{He19}. When $\mu$ and $\nu$ are finitely supported, then the MOT problem amounts to linear programming. In the general case, once the MOT problem is discretised by approximating $\mu$ and $\nu$ by probability measures with finite support and
in the convex order, Alfonsi, Corbetta and Jourdain raised the question of the convergence of the discrete optimal cost towards the
continuous one. Partial results were first brought by Guo and Obłój \cite{GuOb} and the stability of left-curtain couplings obtained by Juillet \cite{Ju14b}. Backhoff-Veraguas and Pammer \cite{BaPa19} and Wiesel \cite{Wi20} independently proved stability of the Martingale Optimal Transport value with respect to the marginal distributions in dimension one under mild regularity assumption on the cost function. Very recently, Br\"uckerhoff and Juillet \cite{BJ} proved that in dimension $d\ge 2$, stability fails and \eqref{newStabilityInequality} does not generalise. Since, on the contrary, for $\rho=2$, the generalisation to any dimension is possible, we may wonder for which values of $\rho$ it is also the case. Note that such a generalization would imply stability of the Martingale Optimal Transport problem for continuous costs which satisfy a growth constraint related to $\rho$ restricted to the case when the second marginal is increased in the convex order. More precisely, let $\rho >1$, $\mu,\nu\in\mathcal P_\rho(\R^d)$ be such that $\mu\le_{cx}\nu$ and $(\nu_n)_{n\in\N}\in\mathcal P_\rho(\R^d)^\N$ be such that $\nu\le_{cx}\nu_n$ for all $n\in\N$ and $\nu_n$ converges to $\nu$ in $\mathcal W_\rho$ as $n\to+\infty$. Let $c:\R^d\times\R^d\to\R$ be continuous and growing at most as the $\rho$-th power of its variables, i.e. $\vert c(x,y)\vert\le K(1+\vert x\vert^\rho+\vert y\vert^\rho)$ for all $(x,y)\in\R^d\times\R^d$ and a certain $K\in\R_+$. It is well known that any sequence $(\pi_n)_{n\in\N}\in\prod_{n\in\N}\Pi^{\mathrm M}(\mu,\nu_n)$ is tight and has all its accumulation points with respect to the weak convergence topology in $\Pi^{\mathrm M}(\mu,\nu)$. Then one can readily derive the first inequality
\[
V(\mu,\nu):=\inf_{\pi\in\Pi^{\mathrm M}(\mu,\nu)}\int_{\R^d\times\R^d}c(x,y)\,\pi(dx,dy)\le\liminf_{n\to+\infty}V(\mu,\nu_n).
\]

On the other hand, for any $(\pi_n)_{n\in\N}\in\prod_{n\in\N}\Pi^{\mathrm M}(\mu,\nu_n)$, we have
\begin{equation}\label{limsupCmunun}
\limsup_{n\to+\infty}V(\mu,\nu_n)\le\limsup_{n\to+\infty}\int_{\R^d\times\R^d}c(x,y)\,\pi_n(dx,dy).
\end{equation}

Recall that a sequence $(\tau_n)_{n\in\N}$ of probability measures on $\R^d\times\R^d$ converges to $\tau$ in $\mathcal W_\rho$ iff the sequence $(\int_{\R^d\times\R^d}f(x,y)\,\tau_n(dx,dy))_{n\in\N}$ converges to $\int_{\R^d\times\R^d}f(x,y)\,\tau(dx,dy)$ for any real-valued continuous map $f:\R^d\times\R^d\to\R$ which grows at most as the $\rho$-th absolute power of its variables. Hence it suffices to find $(\pi_n)_{n\in\N}$ converging in $\mathcal W_\rho$ to some optimal coupling $\pi\in\Pi^{\mathrm M}(\mu,\nu)$ for $V(\mu,\nu)$, which exists by  Lemma \ref{existenceCouplageMartingaleOptimal} below. This lemma also ensures that there exist for all $n\in\N$ a martingale coupling $M_n\in\Pi^{\mathrm M}(\nu,\nu_n)$ optimal for $\mathcal M_\rho(\nu,\nu_n)$. Let $(m_n(y,dy'))_{y\in\R^d}$ be a regular conditional probability distribution of $M_n$ with respect to $\nu$ and $\pi_n(dx',dy')=\int_{y\in\R^d}m_n(y,dy')\,\pi(dx',dy)\in\Pi^{\mathrm M}(\mu,\nu_n)$. Since $\pi(dx,dy)\,\delta_{x}(dx')\,m_n(y,dy')$ is a coupling between $\pi$ and $\pi_n$, we have
\begin{align*}
\mathcal W_\rho^\rho(\pi,\pi_n)&\le\int_{\R^d\times\R^d\times\R^d}\vert y-y'\vert^\rho\,\pi(dx,dy)\,m_n(y,dy')=\int_{\R^d\times\R^d}\vert y-y'\vert^\rho\,M_n(dy,dy')\\
&=\mathcal M_\rho^\rho(\nu,\nu_n)\le C_\rho\mathcal W_\rho(\nu,\nu_n)\sigma_\rho^{\rho-1}(\nu_n).
\end{align*}

If $C_\rho$ is finite, then by convergence of $(\nu_n)_{n\in\N}$ in $\mathcal W_\rho$, the sequences $(\mathcal W_\rho(\nu,\nu_n))_{n\in\N}$ and $(\sigma_\rho^{\rho-1}(\nu_n))_{n\in\N}$ are bounded, hence $(\pi_n)_{n\in\N}$ converges to $\pi$ in $\mathcal W_\rho$. For the above mentionned numerical motivation, this covers the case when the support of $\nu$ is bounded and this measure is approximated by dual quantization.

We present our main result in Section \ref{sec:A new stability inequality}, namely the new one-dimensional martingale Wasserstein inequality which extends the previous one, see \cite{JoMa18}, to any index $\rho\ge1$. Then Section \ref{sec:Towards a multidimensional generalisation} addresses the extension of this inequality to higher dimension. It turns out that the nice example given in \cite{BJ} to prove that the generalisation fails for $\rho=1$ also prevents it for $\rho<\frac{1+\sqrt{5}}{2}$. For $\rho$ larger than this threshold, apart in the particular case $\rho=2$ addressed above, we were not able to prove the generalisation. But we exhibit restricted classes of couples $(\mu,\nu)\in\mathcal P_\rho(\R^d)\times \mathcal P_\rho(\R^d)$ with  $\mu\le_{cx}\nu$ such that the inequality holds with a finite constant $C$ (possibly equal to the one-dimensional constant $C_\rho$) uniform over the class whatever $\rho\ge 1$. In subsection \ref{secext1d}, we exhibit three classes such that $C$ is equal to the one-dimensional constant $C_\rho$. In subsection \ref{secscal}, we deal with the scaling case where $C=3\times 2^{\rho-1}$.

Finally Section \ref{sec:Lemmas} is devoted to the proof of some technical lemmas.

\section{A new martingale Wasserstein inequality in dimension one}
\label{sec:A new stability inequality}

We come back a moment on the family $(M^Q)_{Q\in\mathcal Q}$ parametrised by $\mathcal Q$ mentioned in the introduction since it will have particular significance in the present section. We briefly recall the construction and main properties, see \cite{JoMa18} for an extensive study. Let $\mu,\nu\in\mathcal P_1(\R)$ be such that $\mu\le_{cx}\nu$ and $\mu\neq\nu$. For $u\in[0,1]$ we define
\begin{equation}\label{defPsi+Psi-}
\Psi_+(u)=\int_0^u(F_\mu^{-1}-F_\nu^{-1})^+(v)\,dv\quad\text{and}\quad \Psi_-(u)=\int_0^u(F_\mu^{-1}-F_\nu^{-1})^-(v)\,dv,
\end{equation}
with respective left continuous generalised inverses $\Psi_+^{-1}$ and $\Psi_-^{-1}$. We then define $\mathcal Q$ as the set of probability measures on $(0,1)^2$ with first marginal $\frac{1}{\Psi_+(1)}d\Psi_+$, second marginal $\frac{1}{\Psi_+(1)}d\Psi_-$ and such that $u<v$ for $Q(du,dv)$-almost every $(u,v)\in(0,1)^2$. Since $d\,\Psi_+$ and $d\,\Psi_-$ are concentrated on two disjoint Borel sets, there exists for each $Q\in\mathcal Q$ a probability kernel $(\pi^Q(u,dv))_{u\in(0,1)}$ such that
\begin{align}
&Q(du,dv)=\frac{1}{\Psi_+(1)}d\Psi_+(u)\,\pi^Q(u,dv)=\frac{1}{\Psi_+(1)}d\Psi_-(v)\,\pi^Q(v,du),\label{revQ}\\
|F_\mu^{-1}-F_\nu^{-1}|(u)du&\pi^Q(u,dv)\mbox{ a.e. }
\vert F_\nu^{-1}(v)-F_\mu^{-1}(u)\vert+\vert F_\mu^{-1}(u)-F_\nu^{-1}(u)\vert=\vert F_\nu^{-1}(v)-F_\nu^{-1}(u)\vert.\label{comparaisonFnu-1vFmu-1u}
\end{align}
We define a probability kernel $(\widetilde m^Q(u,dy))_{u\in(0,1)}$ which satisfies for $du$-almost all $u\in(0,1)$ such that $F_\mu^{-1}(u)\neq F_\nu^{-1}(u)$
\begin{align}\label{defmQ}
\begin{split}
&\widetilde m^Q(u,dy)\\
&=\int_{v\in(0,1)}\left(\frac{F_\mu^{-1}(u)-F_\nu^{-1}(u)}{F_\nu^{-1}(v)-F_\nu^{-1}(u)}\delta_{F_\nu^{-1}(v)}(dy)+\frac{F_\nu^{-1}(v)-F_\mu^{-1}(u)}{F_\nu^{-1}(v)-F_\nu^{-1}(u)}\delta_{F_\nu^{-1}(u)}(dy)\right)\,\pi^Q(u,dv),
\end{split}
\end{align}
and $\widetilde m^Q(u,dy)=\delta_{F_\nu^{-1}(u)}(dy)$ for all $u\in(0,1)$ such that $F_\mu^{-1}(u)=F_\nu^{-1}(u)$. Then for $du$-almost all $u\in(0,1)$,
\begin{equation}\label{mtildeQsigneConstant}
\int_\R\vert y- F_\nu^{-1}(u)\vert\,\widetilde m^Q(u,dy)=\vert F_\mu^{-1}(u)-F_\nu^{-1}(u)\vert.
\end{equation}
The measure
\begin{equation}\label{defMQ}
M^Q(dx,dy)=\int_0^1\delta_{F_\mu^{-1}(u)}(dx)\,\widetilde m^Q(u,dy)\,du
\end{equation}
is a martingale coupling between $\mu$ and $\nu$ which satisfies $$\int_{\R\times\R}\vert y-x\vert\,M^Q(dx,dy)\le\int_0^1\int_\R\vert F_\mu^{-1}(u)-F_\nu^{-1}(u)\vert +\vert F_\nu^{-1}(u)-y\vert\,\widetilde m^Q(u,dy)du=2\mathcal W_1(\mu,\nu).$$

We also recall some standard results about cumulative distribution functions and quantile functions since they will prove very handy one-dimensional tools. Proofs can be found for instance in \cite[Appendix A]{JoMa18}. For any probability measure $\eta$ on $\R$:
\begin{enumerate}[(1)]
	\item\label{it:Fcadlag} $F_\eta$, resp. $F_\eta^{-1}$, is right continuous, resp. left continuous, and nondecreasing;
	\item\label{it:inequality equivalence quantile function} For all $(x,u)\in\R\times(0,1)$,
	\begin{equation}\label{eq:equivalence quantile cdf}
	F_\eta^{-1}(u)\le x\iff u\le F_\eta(x),
	\end{equation}
	which implies
	\begin{align}\label{eq:jumps F}
	&F_\eta(x-)<u\le F_\eta(x)\implies x=F_\eta^{-1}(u),\\
	\label{eq:jumps F2}
	\textrm{and}\quad&F_\eta(F_\eta^{-1}(u)-)\le u\le F_\eta(F_\eta^{-1}(u));
	\end{align}
	\item\label{it:Fminus of F} For $\mu(dx)$-almost every $x\in\R$,
	\begin{equation}\label{eq:F-1circF}
	0<F_\eta(x),\quad F_\eta(x-)<1\quad\text{and}\quad F_\eta^{-1}(F_\eta(x))=x;
	\end{equation}
	\item Denoting by $\lambda_{(0,1)}$, resp. $\lambda_{(0,1)^2}$, the Lebesgue measure on $(0,1)$, resp. $(0,1)^2$, we have
	\begin{equation}\label{copule}
	\left((u,v)\mapsto F_\mu(F_\mu^{-1}(u)-)+v\mu(\{F_\mu^{-1}(u)\})\right)_\sharp\lambda_{(0,1)^2}=\lambda_{(0,1)},
	\end{equation}
	where $\sharp$ denotes the pushforward operation.
	\item\label{it:inverse transform sampling} The image of the Lebesgue measure on $(0,1)$ by $F_\eta^{-1}$ is $\eta$.
\end{enumerate}

The property \ref{it:inverse transform sampling} is referred to as inverse transform sampling.

We can now state and prove our main result. For all $\rho\ge1$ and $\mu,\nu\in\mathcal P_\rho(\R)$ in the convex order, we provide an estimate of the martingale Wasserstein function $\mathcal M_\rho(\mu,\nu)$ in terms of the Wasserstein distance $\mathcal W_p(\mu,\nu)$ and the centred $\rho$-th moment of $\nu$.

\begin{prop2}\label{newStabilityInequalityMajoration} Let $\rho\ge1$ and $\mu,\nu\in\mathcal P_\rho(\R)$ be such that $\mu\le_{cx}\nu$. Then
	\begin{enumerate}[(i)]
		\item\label{it:majorationMrhorhoKrho} For all $Q\in\mathcal Q$, the martingale coupling $M^Q\in\Pi^{\mathrm M}(\mu,\nu)$ defined by \eqref{defMQ} satisfies
		\begin{equation}\label{majorationMrhorhoKrho}
		\mathcal M_\rho^\rho(\mu,\nu)\le \int_{\R\times\R}|x-y|^\rho M^Q(dx,dy)\le K_\rho\mathcal W_\rho(\mu,\nu)\sigma_\rho^{\rho-1}(\nu),
		\end{equation}
		where 
		\begin{equation}\label{defKrho}
		K_\rho=\inf\left\{2^{\rho-1}\gamma_1+2(2^{\rho-2}\vee1)\gamma_2\mid(\gamma_1,\gamma_2)\in\R_+^2\text{ and }\forall x\in\R_+,\ \frac{x+x^\rho}{1+x}\le \gamma_1+\gamma_2(1+x)^{\rho-1}\right\}.
		\end{equation}
		
		\item\label{it:encadrementCrho} The constant $C_\rho$ defined by \eqref{defCrho} satisfies $C_1=K_1=2$, $C_\rho=K_\rho=2^{\rho-1}$ when $\rho\ge 2$ and, for $1<\rho<2$,
		\begin{equation}\label{encadrementCrho}\hspace{-10pt}
		2^{\rho-1}\sup_{x\in(1,+\infty)}\frac{x+x^\rho}{(1+x)^\rho}\le C_\rho\le K_\rho\le
		\left(2^{\rho-1}+2\right)\wedge\left(2\sup_{x\in(1,+\infty)}\frac{x+x^{\rho}}{(1+x)^\rho}\right).\end{equation}
		
		\item\label{it:bonsExposants} $\mathcal W_\rho(\mu,\nu)$ and $\sigma_\rho(\nu)$ have the right exponent in \eqref{newStabilityInequality} in the following sense:
		\begin{equation}\label{bonsExposants}
		\forall\rho>1,\quad\forall s\in(1,\rho],\quad\sup_{\substack{\mu,\nu\in\mathcal P_\rho(\R)\\\mu\le_{cx}\nu}}\frac{\mathcal M_\rho^\rho(\mu,\nu)}{\mathcal W_\rho^s(\mu,\nu)\sigma_\rho^{\rho-s}(\nu)}=+\infty.
		\end{equation}
	\end{enumerate}
\end{prop2}
\begin{figure}[ht]
	\begin{center}
		\psfrag{rho}{\large $\rho$}
		\psfrag{majK}{\large upper bound of $K_\rho$ in \eqref{encadrementCrho}}
		\psfrag{K}{\large $K_\rho$}
		\psfrag{minC}{\large lower bound of $C_\rho$ in \eqref{encadrementCrho}}
		\scalebox{0.75}{\includegraphics{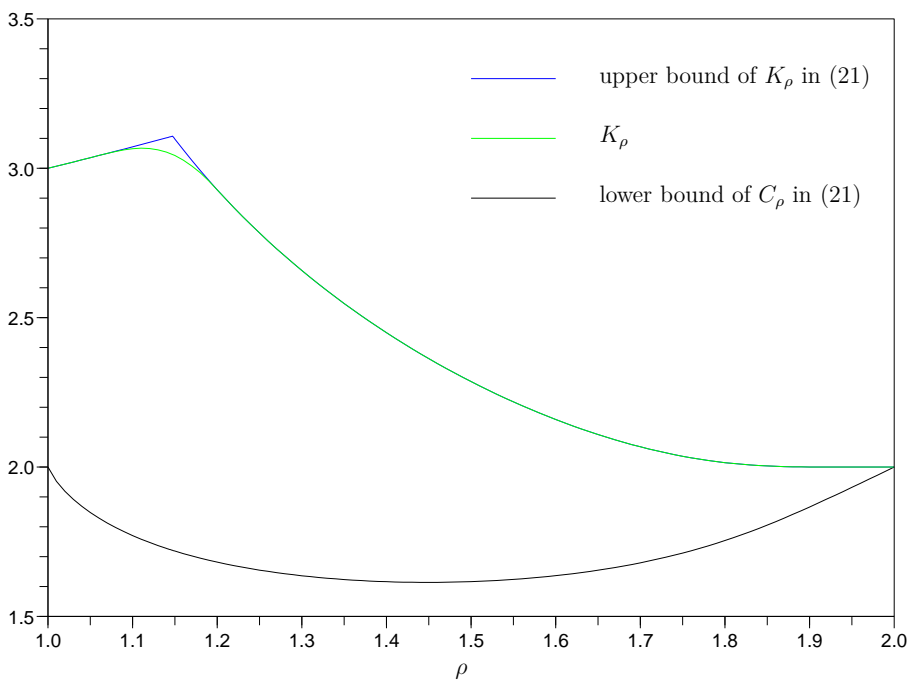}}
		\caption{Plot of $(1,2]\ni\rho\mapsto K_\rho$ with the lower and upper bounds in \eqref{encadrementCrho}.}
		\label{figkrho}\end{center}
\end{figure}

\begin{rk}\label{rknewStabilityInequalityMajoration}
	\begin{enumerate}[(i)]
		\item Let $\rho\in (1,2)$. For $x\ge 0$, $(1+x)^\rho=x^\rho+\int_x^{1+x}\rho y^{\rho-1}dy\le x^\rho+\rho(1+x)^{\rho-1}$. Hence for $x>\rho(1+x)^{\rho-1}$, $x^\rho+\rho(1+x)^{\rho-1}<x^\rho+x$ so that $\sup_{x\in(1,+\infty)}\frac{x+x^\rho}{(1+x)^\rho}>1$. Moreover, $\gamma_2\ge 1$ is necessary for $(\gamma_1,\gamma_2)$ to belong in the set which appears in the definition \eqref{defKrho} of $K_\rho$ and when $\gamma_2=1$, then, by a easy generalization of the previous reasoning, $\gamma_1\ge 1$ is necessary. In the last step of the proof of Proposition \ref{newStabilityInequalityMajoration}, we check that both $(\gamma_1,\gamma_2)=(1,1)$ and $(\gamma_1,\gamma_2)=\left(0,\sup_{x\in(1,+\infty)}\frac{x+x^\rho}{(1+x)^\rho}\right)$ are admissible. The upper bound of $K_\rho$ in \eqref{encadrementCrho} is the minimum of $2^{\rho-1}\gamma_1+2(2^{\rho-2}\vee1)\gamma_2=2^{\rho-1}\gamma_1+2\gamma_2$ over these two couples. In Figure \ref{figkrho}, we plot for $\rho\in (1,2]$, the lower and upper bounds in \eqref{encadrementCrho} together with some numerical estimation of $K_\rho$. The supremum of $x\mapsto f_\rho(x):=\frac{x+x^\rho}{(1+x)^\rho}$ over $(1,+\infty)$ is computed numerically by iterating the function $x\mapsto \frac{\rho x^{\rho-1}+1}{\rho-1}$ in order to find the unique root of the derivative $f'_\rho(x)=\frac{\rho x^{\rho-1}+1-(\rho-1)x}{(1+x)^{\rho+1}}$ and $K_\rho$ is obtained by minimising $2^{\rho-1}\gamma_1+2\sup_{x\in\R_+}\frac{x^\rho+(1-\gamma_1)x-\gamma_1}{(1+x)^\rho}$ over $\gamma_1$ in a grid of the interval $[0,1]$ with the supremum computed in the same way as for $\gamma_1=0$.
		It turns out that away from the neighbourhood $[1.1,1.2]$ of the point where the two functions of $\rho$ involved in the minimum in the right-hand side of \eqref{encadrementCrho} intersect, this right-hand side coincides with $K_\rho$.
		\item When $\rho=1$, the condition $\gamma_2\ge 1$ is no longer necessary for $(\gamma_1,\gamma_2)$ to belong in the set which appears in the definition \eqref{defKrho} of $K_\rho$ and $(\gamma_1,\gamma_2)=(2,0)$ is admissible so that $K_1\le 2$, while $\lim_{\rho\to 1+}K_\rho$ appears to be equal to $3$ according to Figure \ref{figkrho}.
		\item When $\rho\ge 2$, in the last step of the proof of the proposition, we obtain that $K_\rho\le 2^{\rho-1}$ from the admissibility of $(\gamma_1,\gamma_2)=(0,1)$. 
		\item We show (see \eqref{majorationWrhorho} below) that $\mathcal W_\rho(\mu,\nu)\le2\sigma_\rho(\nu)$, so that for $s\in[0,1]$,
		\[
		\mathcal M_\rho^\rho(\mu,\nu)\le2^{1-s}C_\rho\mathcal W_\rho^s(\mu,\nu)\sigma_\rho^{\rho-s}(\nu).
		\]
	\end{enumerate}
\end{rk}

\begin{proof}[Proof of Proposition \ref{newStabilityInequalityMajoration}] Let us prove \ref{it:bonsExposants} first. One can readily show (see for instance \cite[(2.24)]{JoMa18}) that for $a,b\in\R$ such that $0<a<b$,
	\begin{equation}\label{uniqueCouplageMartRademacher2}
	H=\frac{(b+a)}{4b}\delta_{(-a,-b)}+\frac{(b-a)}{4b}\delta_{(-a,b)}+\frac{(b+a)}{4b}\delta_{(a,b)}+\frac{(b-a)}{4b}\delta_{(a,-b)}
	\end{equation}
	is the only martingale coupling between $\mu=\frac12\delta_{-a}+\frac12\delta_a$ and $\nu=\frac12\delta_{-b}+\frac12\delta_b$. Consequenly, for $\rho\ge1$ we trivially have
	\[
	\mathcal M_\rho^\rho(\mu,\nu)=\int_{\R\times\R}\vert x-y\vert^\rho\,H(dx,dy)=\frac{1}{2b}\left((a+b)(b-a)^\rho+(b-a)(a+b)^\rho\right).
	\]
	
	On the other hand, since $\mathcal W_\rho(\mu,\nu)=\left(\int_0^1\vert F_\mu^{-1}(u)-F_\nu^{-1}(u)\vert\,du\right)^{1/\rho}$ (see for instance Remark 2.19 (ii) Chapter 2 \cite{Vi1}),
	\[
	\mathcal W_\rho(\mu,\nu)=\left(\int_0^{1/2}\vert -a-(-b)\vert^\rho\,du+\int_{1/2}^{1}\vert a-b\vert^\rho\,du\right)^{1/\rho}=b-a.
	\]
	
	Moreover, for all $c\in\R$, $\int_\R\vert y-c\vert^\rho\,\nu(dy)=\frac12(\vert b-c\vert^\rho+\vert b+c\vert^\rho)$, which attains its infimum for $c=0$, hence $\sigma_\rho(\nu)=b$. So for all $s\in[1,\rho]$, we have
	\begin{equation}\label{minorationCrho}
	\frac{\mathcal M^\rho_\rho(\mu,\nu)}{\mathcal W_\rho^s(\mu,\nu)\sigma_\rho(\nu)^{\rho-s}}=\frac{1}{2b^{\rho+1-s}}\left((a+b)(b-a)^{\rho-s}+(a+b)^\rho(b-a)^{1-s}\right)\ge\frac{(a+b)^\rho(b-a)^{1-s}}{2b^{\rho+1-s}},
	\end{equation}
	which tends to $+\infty$ as $b$ tends to $a$ as soon as $\rho>1$ and $s\in(1,\rho]$, which proves \ref{it:bonsExposants}. Furthermore, \eqref{minorationCrho} applied with $s=1$, $a=1$ and $b>1$ yields
	\[
	\frac{\mathcal M^\rho_\rho(\mu,\nu)}{\mathcal W_\rho(\mu,\nu)\sigma_\rho(\nu)^{\rho-1}}=\frac{(1+b)(b-1)^{\rho-1}+(1+b)^\rho}{2b^{\rho}}\cdot
	\]
	
	In particular for $b=\frac{x+1}{x-1}$ where $x$ denotes any real number in $(1,+\infty)$, the latter equality writes
	\[
	\frac{\mathcal M^\rho_\rho(\mu,\nu)}{\mathcal W_\rho(\mu,\nu)\sigma_\rho(\nu)^{\rho-1}}=2^{\rho-1}\frac{x+x^\rho}{(1+x)^\rho},
	\]
	which proves the lower bound in \eqref{encadrementCrho} :
	\begin{equation}
	C_\rho\ge 2^{\rho-1}\sup_{x\in(1,+\infty)}\frac{x+x^\rho}{(1+x)^\rho}.\label{minocrho}
	\end{equation} Note that considering more general measures $\mu$ and $\nu$ in the convex order, each concentrated on two atoms, does not yield a greater lower bound.
	
	We now show \ref{it:majorationMrhorhoKrho}. Let $Q\in\mathcal Q$. Since the probability measure $M^Q$ defined by \eqref{defMQ} belongs to $\Pi^{\textrm M}(\mu,\nu)$, we have by definition of $\mathcal M_\rho(\mu,\nu)$ and the definition \eqref{defmQ} of $\widetilde m^Q$ that
	\begin{align}\label{majorationMrhorho1}\begin{split}
	\mathcal M_\rho^\rho(\mu,\nu)&\le\int_{\R\times\R}\vert y-x\vert^\rho\,M^Q(dx,dy)=\int_{(0,1)\times\R}\vert y-F_\mu^{-1}(u)\vert^\rho\,du\,\widetilde m^Q(u,dy)\\
	&=\int_{(0,1)^2}\vert F_\nu^{-1}(v)-F_\mu^{-1}(u)\vert^\rho\frac{\vert F_\mu^{-1}(u)-F_\nu^{-1}(u)\vert}{\vert F_\nu^{-1}(v)-F_\nu^{-1}(u)\vert}\1_{\{\vert F_\mu^{-1}(u)-F_\nu^{-1}(u)\vert>0\}}\,du\,\pi^Q(u,dv)\\
	&+\int_{(0,1)^2}\vert F_\nu^{-1}(u)-F_\mu^{-1}(u)\vert^\rho\frac{\vert F_\nu^{-1}(v)-F_\mu^{-1}(u)\vert}{\vert F_\nu^{-1}(v)-F_\nu^{-1}(u)\vert}\1_{\{\vert F_\mu^{-1}(u)-F_\nu^{-1}(u)\vert>0\}}\,du\,\pi^Q(u,dv).
	\end{split}
	\end{align}
	Let us recall \eqref{comparaisonFnu-1vFmu-1u} :	
	$$|F_\mu^{-1}(u)-F_\nu^{-1}(u)|du\pi^Q(u,dv)\mbox{ a.e. }
	\vert F_\nu^{-1}(v)-F_\mu^{-1}(u)\vert+\vert F_\mu^{-1}(u)-F_\nu^{-1}(u)\vert=\vert F_\nu^{-1}(v)-F_\nu^{-1}(u)\vert.$$
	Let $\gamma_1,\gamma_2\ge0$ be such that for all $x\in\R_+$, $\frac{x+x^\rho}{1+x}\le \gamma_1+\gamma_2(1+x)^{\rho-1}$. Therefore, for all $(a,b)\in\R_+\times\R_+$ such that $a+b>0$, we have
	\begin{equation}\label{inegalitec1c2}
	\frac{a^\rho b+ab^\rho}{a+b}\le\gamma_1a^\rho+\gamma_2a(a+b)^{\rho-1}.
	\end{equation}
	
	By \eqref{majorationMrhorho1}, \eqref{comparaisonFnu-1vFmu-1u} and \eqref{inegalitec1c2} with $(a,b)=(\vert F_\nu^{-1}(u)-F_\mu^{-1}(u)\vert,\vert F_\nu^{-1}(v)-F_\mu^{-1}(u)\vert)$, we get
	\begin{align}\label{majorationMrhorho2}\begin{split}
	\mathcal M_\rho^\rho(\mu,\nu)&\le \gamma_1\int_{(0,1)}\vert F_\nu^{-1}(u)-F_\mu^{-1}(u)\vert^\rho\,du\\
	&\phantom{\le\ }+\gamma_2\int_{(0,1)}\vert F_\nu^{-1}(v)-F_\nu^{-1}(u)\vert^{\rho-1}\vert F_\mu^{-1}(u)-F_\nu^{-1}(u)\vert\,du\,\pi^Q(u,dv)\\
	&=\gamma_1\mathcal W_\rho^\rho(\mu,\nu)+\gamma_2\int_{(0,1)}\vert F_\nu^{-1}(v)-F_\nu^{-1}(u)\vert^{\rho-1}\vert F_\mu^{-1}(u)-F_\nu^{-1}(u)\vert\,du\,\pi^Q(u,dv).
	\end{split}
	\end{align}
	
	Let $c\in\R$. On the one hand, the inverse transform sampling and the definition \eqref{def:convexOrder} of the convex order applied with $x\mapsto\vert x-c\vert^\rho$ yield
	\begin{align}\label{majorationWrhorho}\begin{split}
	&\mathcal W_\rho^\rho(\mu,\nu)\\
	&=\int_{(0,1)}\vert F_\nu^{-1}(u)-F_\mu^{-1}(u)\vert^\rho\,du\le2^{\rho-1}\left(\int_{(0,1)}\vert F_\nu^{-1}(u)-c\vert^\rho\,du+\int_{(0,1)}\vert F_\mu^{-1}(u)-c\vert^\rho\,du\right)\\
	&=2^{\rho-1}\left(\int_\R\vert y-c\vert^\rho\,\nu(dy)+\int_\R\vert x-c\vert^\rho\,\mu(dx)\right)\le2^\rho\int_\R\vert y-c\vert^\rho\,\nu(dy).
	\end{split}
	\end{align}
	
	We deduce that 
	\begin{equation}\label{eq3}
	\mathcal W_\rho^\rho(\mu,\nu)=\mathcal W_\rho(\mu,\nu)\mathcal W_\rho^{\rho-1}(\mu,\nu)\le\mathcal W_\rho(\mu,\nu)\times2^{\rho-1}\left(\int_\R\vert y-c\vert^\rho\,\nu(dy)\right)^{(\rho-1)/\rho}.
	\end{equation}
	
	On the other hand we have
	\begin{align}\label{majorationMrhorho3}\begin{split}
	&\int_{(0,1)}\vert F_\nu^{-1}(v)-F_\nu^{-1}(u)\vert^{\rho-1}\vert F_\mu^{-1}(u)-F_\nu^{-1}(u)\vert\,du\,\pi^Q(u,dv)\\
	&=\int_{(0,1)^2}\vert F_\nu^{-1}(v)-F_\nu^{-1}(u)\vert^{\rho-1}(F_\mu^{-1}-F_\nu^{-1})^+(u)\,du\,\pi^Q(u,dv)\\
	&\phantom{=}+\int_{(0,1)^2}\vert F_\nu^{-1}(v)-F_\nu^{-1}(u)\vert^{\rho-1}(F_\mu^{-1}-F_\nu^{-1})^-(u)\,du\,\pi^Q(u,dv).
	\end{split}
	\end{align}
	
	Using the inequality $\vert x-y\vert^{\rho-1}\le(2^{\rho-2}\vee1)(\vert x\vert^{\rho-1}+\vert y\vert^{\rho-1})$ valid for all $(x,y)\in\R$ and the fact that $(F_\mu^{-1}-F_\nu^{-1})^+(u)\,du\,\pi^Q(u,dv)=Q(du,dv)=(F_\mu^{-1}-F_\nu^{-1})^-(v)\,dv\,\pi^Q(v,du)$ according to \eqref{revQ}, we get
	\begin{align}\label{majorationMrhorho4}\begin{split}
	&\int_{(0,1)^2}\vert F_\nu^{-1}(v)-F_\nu^{-1}(u)\vert^{\rho-1}(F_\mu^{-1}-F_\nu^{-1})^+(u)\,du\,\pi^Q(u,dv)\\
	&\le(2^{\rho-2}\vee1)\left(\int_{(0,1)^2}\vert F_\nu^{-1}(v)-c\vert^{\rho-1}\,(F_\mu^{-1}-F_\nu^{-1})^+(u)\,du\,\pi^Q(u,dv)\right.\\
	&\phantom{\le(2^{\rho-2}\vee1)}\left.+\int_{(0,1)^2}\vert F_\nu^{-1}(u)-c\vert^{\rho-1}\,(F_\mu^{-1}-F_\nu^{-1})^+(u)\,du\,\pi^Q(u,dv)\right)\\
	&=(2^{\rho-2}\vee1)\left(\int_{(0,1)^2}\vert F_\nu^{-1}(u)-c\vert^{\rho-1}(F_\mu^{-1}-F_\nu^{-1})^-(u)\,du\,\pi^Q(u,dv)\right.\\
	&\phantom{\le(2^{\rho-2}\vee1)}\left.+\int_{(0,1)^2}\vert F_\nu^{-1}(u)-c\vert^{\rho-1}(F_\mu^{-1}-F_\nu^{-1})^+(u)\,du\,\pi^Q(u,dv)\right)\\
	&=(2^{\rho-2}\vee1)\int_{(0,1)}\vert F_\nu^{-1}(u)-c\vert^{\rho-1}\vert F_\mu^{-1}(u)-F_\nu^{-1}(u)\vert\,du.
	\end{split}
	\end{align}
	
	Similarly, we have
	\begin{align}\label{majorationMrhorho5}\begin{split}
	&\int_{(0,1)^2}\vert F_\nu^{-1}(v)-F_\nu^{-1}(u)\vert^{\rho-1}(F_\mu^{-1}-F_\nu^{-1})^-(u)\,du\,\pi^Q(u,dv)\\
	&\le(2^{\rho-2}\vee1)\int_{(0,1)}\vert F_\nu^{-1}(u)-c\vert^{\rho-1}\vert F_\mu^{-1}(u)-F_\nu^{-1}(u)\vert\,du.
	\end{split}
	\end{align}
	
	Plugging \eqref{majorationMrhorho4} and \eqref{majorationMrhorho5} in \eqref{majorationMrhorho3} for the first inequality, using Hölder's inequality for the second inequality and the inverse transform sampling for the equality, we have
	\begin{align*}
	&\int_{(0,1)}\vert F_\nu^{-1}(v)-F_\nu^{-1}(u)\vert^{\rho-1}\vert F_\mu^{-1}(u)-F_\nu^{-1}(u)\vert\,du\,\pi^Q(u,dv)\\
	&\le2(2^{\rho-2}\vee1)\int_{(0,1)}\vert F_\nu^{-1}(u)-c\vert^{\rho-1}\vert F_\mu^{-1}(u)-F_\nu^{-1}(u)\vert\,du\\
	&\le2(2^{\rho-2}\vee1)\left(\int_{(0,1)}\vert F_\mu^{-1}(u)-F_\nu^{-1}(u)\vert^\rho\,du\right)^{1/\rho}\left(\int_{(0,1)}\vert F_\nu^{-1}(u)-c\vert^\rho\,du\right)^{(\rho-1)/\rho}\\
	&=2(2^{\rho-2}\vee1)\mathcal W_\rho(\mu,\nu)\left(\int_\R\vert y-c\vert^\rho\,\nu(dy)\right)^{(\rho-1)/\rho}.
	\end{align*}
	
	The latter inequality and \eqref{eq3} plugged in \eqref{majorationMrhorho2} then yields
	\[
	\mathcal M_\rho^\rho(\mu,\nu)\le(2^{\rho-1}\gamma_1+2(2^{\rho-2}\vee1)\gamma_2)\mathcal W_\rho(\mu,\nu)\left(\int_\R\vert y-c\vert^\rho\,\nu(dy)\right)^{(\rho-1)/\rho}.
	\]
	
	By taking in the right-hand side the infimum over all $(\gamma_1,\gamma_2)\in\R_+\times\R_+$ such that for all $x\in\R_+$, $\frac{x+x^\rho}{1+x}\le\gamma_1+\gamma_2x^{\rho-1}$ and over all $c\in\R$, we deduce that
	\[
	\mathcal M_\rho^\rho(\mu,\nu)\le K_\rho\mathcal W_\rho(\mu,\nu)\sigma_\rho^{\rho-1}(\nu).
	\]
	
	To complete the proof, it remains to prove \ref{it:encadrementCrho}. The definition \eqref{defCrho} of $C_\rho$ implies that $C_\rho\le K_\rho$.
	
	We have $\sup_{x\in\R_+}\frac{x+x^1}{1+x}=2$. Hence $K_1\le2$ and, by \eqref{minocrho}, $2\le C_1$. Therefore $C_1=K_1=2$.
	
	Let us next suppose that $\rho\ge 2$. Then $\R_+\ni x\mapsto (1+x)^\rho-x^\rho$ is a convex function above its tangent at the origin which writes $(1+x)^\rho-x^\rho\ge 1+\rho x$ so that $\frac{x+x^\rho}{1+x}\le (1+x)^{\rho-1}$ for all $x\in\R_+$ and $K_\rho\le 2^{\rho-1}\times 0+2(2^{\rho-2}\vee 1)\times 1=2^{\rho-1}$. Since the lower bound in \eqref{minocrho} is equal to $2^{\rho-1}$, we deduce that $C_\rho=K_\rho=2^{\rho-1}$.
	
	Let us finally suppose that $\rho\in(1,2)$.
	For all $x\in\R_+$, $\frac{x+x^\rho}{1+x}\le 1+x^{\rho-1}\le 1+(1+x)^{\rho-1}$, hence $K_\rho\le2^{\rho-1}+2$. Moreover, let $\gamma_2=\sup_{x\in(1,+\infty)}\frac{x+x^{\rho}}{(1+x)^\rho}$. By Remark \ref{rknewStabilityInequalityMajoration}, $\gamma_2>1$. Since $\R_+\ni x\mapsto (1+x)^\rho-x^\rho$ is non-decreasing, for $x\in[0,1]$, $(1+x)^\rho\ge x^\rho+1\ge x^\rho+x$. Hence $\forall x\in\R_+$, $\frac{x+x^\rho}{1+x}\le \gamma_2(1+x)^{\rho-1}$ and $K_\rho\le2\gamma_2=2\sup_{x\in(1,+\infty)}\frac{x+x^{\rho}}{(1+x)^\rho}$. We conclude that
	\[
	K_\rho\le\left(2^{\rho-1}+2\right)\wedge\left(2\sup_{x\in(1,+\infty)}\frac{x+x^{\rho}}{(1+x)^\rho}\right).
	\]
\end{proof}
\begin{rk} For $\rho=2$, by \eqref{expressionM2} for the first inequality and the fact that $\sigma_2(\nu)$ is the standard deviation of $\nu$, consequence of the bias-variance decomposition, for the last equality, we have
	\begin{align*}
	\mathcal W_2^2(\mu,\nu)&\le\mathcal M_2^2(\mu,\nu)\\
	&=\int_\R y^2\,\nu(dy)-\int_\R x^2\,\mu(dx)\\
	&\le\int_\R y^2\,\nu(dy)-\left(\int_\R x\,\mu(dx)\right)^2=\int_\R y^2\,\nu(dy)-\left(\int_\R y\,\nu(dy)\right)^2\\
	&=\sigma_2(\nu),
	\end{align*}
	where the inequalities are equalities as soon as $\mu$ is reduced to an atom. Therefore we can improve the constant $2^\rho$ in \eqref{majorationWrhorho} at least in the case $\rho=2$. We can then naturally wonder whether we can also improve this constant for any $\rho>1$. The constant
	\[
	C'_\rho=\sup_{\nu\in\mathcal P_\rho(\R)}\frac{\int_\R\left\vert y-\int_\R z\,\nu(dz)\right\vert^\rho\,\nu(dy)}{\int_\R\vert y\vert^\rho\,\nu(dy)}
	\]
	is studied in \cite{Mori}. For all $\nu\in\mathcal P_\rho(\R)$, let $c\in\R$ be such that $\sigma_\rho^\rho(\nu)=\int_\R\vert y-c\vert^\rho\,\nu(dy)$ and $\nu_c$ be the image of $\nu$ by $y\mapsto y-c$. Then we have
	\[
	\int_\R\vert z\vert^\rho\,\nu_c(dz)=\sigma_\rho^\rho(\nu),\quad\int_\R\left\vert y-\int_\R z\,\nu_c(dz)\right\vert^\rho\,\nu_c(dy)=\int_\R\left\vert y-\int_\R z\,\nu(dz)\right\vert^\rho\,\nu(dy)=\mathcal W_\rho^\rho(\mu_\nu,\nu),
	\]
	where we denote $\mu_\nu=\delta_{\int_\R y\,\nu(dy)}$, which is dominated by $\nu$ in the convex order. We deduce that
	\[
	C'_\rho=\sup_{\nu\in\mathcal P_\rho(\R)}\frac{\mathcal W_\rho^\rho(\mu_\nu,\nu)}{\sigma_\rho^\rho(\nu)}\cdot
	\]
	
	Yet by \cite[Theorem 2.3]{Mori} we have $C'_\rho\sim_{\rho\to+\infty}\frac{2^{\rho-1}}{\sqrt{2\mathrm e\rho}}$, which shows that we cannot lower the constant $2^\rho$ in \eqref{majorationWrhorho} by a factor more than $2\sqrt{2\mathrm e\rho}$ asymptotically for $\rho\to+\infty$.
\end{rk}

\section{On multidimensional generalisations}
\label{sec:Towards a multidimensional generalisation}
One may legitimately wonder whether the new martingale Wasserstein inequality \eqref{newStabilityInequality} holds in higher dimension $d\in\N^*$, that is if for all $\rho\ge1$, there exists $C\in\R_+^*$ such that for all $\mu,\nu\in\mathcal P_\rho(\R^d)$ satisfying $\mu\le_{cx}\nu$,
\begin{equation}\label{newStabilityInequalityGalDimension}
\mathcal M_\rho^\rho(\mu,\nu)\le C\mathcal W_\rho(\mu,\nu)\sigma_\rho^{\rho-1}(\nu).
\end{equation}

For all $d\in\N^*$ and $\rho\ge1$, we define $C_{\rho,d}$ by
\begin{equation}
C_{\rho,d}=\inf\left\{C>0\mid\forall\mu,\nu\in\mathcal P_\rho(\R^d)\mbox{ such that }\mu\le_{cx}\nu,\ \mathcal M_\rho^\rho(\mu,\nu)\le C\mathcal W_\rho(\mu,\nu)\sigma_\rho^{\rho-1}(\nu)\right\}.\label{defcrhod}
\end{equation}

The constant $C_{\rho,d}$ is well defined but is potentially infinite. Of course, for $d=1$, we get $C_{\rho,d}=C_\rho$. Moreover, $C_{\rho,d}$ depends a priori on the choice of the norm in $\R^d$, but since all norms on $\R^d$ are equivalent, $C_{\rho,d}$ is finite for one specific norm iff it is finite for any norm.
In the next subsection, we give lower-bounds of the constant $C_{\rho,d}$ depending on $\rho\ge 1$ but neither on $d\ge 2$ nor on the norm $\R^d$ is endowed with. In particular, by investigating the consequences for $\rho\ge 1$ of the very nice example introduced by Br\"uckerhoff and Juillet \cite{BJ} to show that $C_{1,d}=+\infty$ for $d\ge 2$, we extend this equality to $\rho\in\left[1,\frac{1+\sqrt{5}}{2}\right)$. Unfortunately, apart in the particular case $\rho=2$ already addressed in the introduction, we were not able to prove the finiteness of $C_{\rho,d}$ for some $\rho$ in the complement interval $\left[\frac{1+\sqrt{5}}{2},+\infty\right)$. That is the reason why, we investigated restricted classes of couples $(\mu,\nu)\in\mathcal P_\rho(\R^d)\times \mathcal P_\rho(\R^d)$ with  $\mu\le_{cx}\nu$ such that \eqref{newStabilityInequalityGalDimension} holds with a finite constant $C$ uniform over the class. In subsection \ref{secext1d}, we exhibit three classes such that $C$ is equal to the one-dimensional constant $C_\rho$. In subsection \ref{secscal}, we deal with the scaling case where $C=3\times 2^{\rho-1}$.
\subsection{Lower-bounds on the constant $C_{\rho,d}$}
\begin{prop2}\label{CdgreaterThan2} Let $d\in\N^*\backslash\{1\}$ and $\rho\ge1$. Regardless of the norm $\R^d$ is endowed with, we have
	\begin{align}
	&C_{\rho,d}=+\infty\mbox{ for }\rho\in\left[1,\frac{1+\sqrt{5}}{2}\right),\;C_{\frac{1+\sqrt{5}}{2},d}\ge 2^{\frac{\sqrt{5}-1}{2}}\left(\frac{3+\sqrt{5}}{2}\right)^{\frac{3-\sqrt{5}}{2}}\simeq 2.217\label{lbusc},\\&C_{\rho,d}\ge 2
	\mbox{ for }\rho\in \left(\frac{1+\sqrt{5}}{2},2\right)\mbox{ and }C_{\rho,d}\ge 2^{\rho-1}
	\mbox{ for }\rho\ge 2.
	\end{align}
\end{prop2}
The equality $C_{1,d}=+\infty$ for $d\ge 2$ was recently obtained by Br\"uckerhoff and Juillet \cite{BJ} by exhibiting the very nice Example \ref{exbj} in dimension $d=2$. We derive \eqref{lbusc} for $d=2$ by investigating the consequences of this example for $\rho\ge 1$.
The extension to $d>2$ then follows from the next lemma and the fact that $\R^2\ni(x_1,x_2)\mapsto |(x_1,x_2,0,\cdots,0)|$ is a norm whatever the norm $\vert\cdot\vert$ $\R^d$ is endowed with. The case $\rho\in\rho\in \left(\frac{1+\sqrt{5}}{2},2\right)$ is deduced in the same way from Example \ref{extri} in subsection \ref{secscal} while the case $\rho\ge 2$ follows from the next lemma combined with the equality $C_{\rho}=2^{\rho-1}$ established in Proposition \ref{newStabilityInequalityMajoration}. 
\begin{lemma}\label{CdIsIncreasing} Let $d,d'\in\N^*$ be such that $d'<d$, $\rho\ge1$, $\vert\cdot\vert$ be a norm on $\R^d$ and $\vert\cdot\vert'$ be a norm on $\R^{d'}$ satisfying the following consistency condition:
	\begin{equation}\label{consistencyConditionNorms}
	\exists\lambda>0,\;\forall x_1,\cdots,x_{d'}\in\R,\quad\vert(x_1,\cdots,x_{d'})\vert'=\lambda\vert(x_1,\cdots,x_{d'},0,\cdots,0)\vert.
	\end{equation}
	
	Then $C_{\rho,d'}\le C_{\rho,d}$ for $\R^d$ and $\R^{d'}$ respectively endowed with $\vert\cdot\vert$ and $\vert\cdot\vert'$.\\In particular, $C_\rho\le C_{\rho,d}$ regardless of the norm $\R^d$ is endowed with.
\end{lemma}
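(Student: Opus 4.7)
The plan is to transport the inequality from $\R^d$ down to $\R^{d'}$ by isometric embedding. Let $\iota:\R^{d'}\to\R^d$ denote the linear injection $\iota(x_1,\dots,x_{d'})=(x_1,\dots,x_{d'},0,\dots,0)$. By the consistency condition \eqref{consistencyConditionNorms} applied to the difference of two vectors, $|\iota(x')-\iota(y')|=\frac{1}{\lambda}|x'-y'|'$ for all $x',y'\in\R^{d'}$, so $\iota$ is an isometric embedding up to a factor $1/\lambda$. Given any $\mu',\nu'\in\mathcal P_\rho(\R^{d'})$ with $\mu'\le_{cx}\nu'$, set $\mu=\iota_\sharp\mu'$ and $\nu=\iota_\sharp\nu'$. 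Since $f\circ\iota$ is convex on $\R^{d'}$ whenever $f:\R^d\to\R$ is convex, the pushforwards inherit the convex order: $\mu\le_{cx}\nu$.

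Next, I would transport couplings between $\mu'$ and $\nu'$ to couplings between $\mu$ and $\nu$ via $(\iota,\iota)_\sharp$, and observe that conversely every coupling of $\mu$ and $\nu$ is supported on $\iota(\R^{d'})\times\iota(\R^{d'})$ since the marginals are; hence $(\iota,\iota)_\sharp$ is a bijection from $\Pi(\mu',\nu')$ onto $\Pi(\mu,\nu)$. By linearity and injectivity of $\iota$, this bijection preserves the martingale property, so it also restricts to a bijection between $\Pi^{\mathrm M}(\mu',\nu')$ and $\Pi^{\mathrm M}(\mu,\nu)$. Using the isometry relation on the cost $|x-y|^\rho$ yields
\[
\mathcal W_\rho(\mu,\nu)=\frac{1}{\lambda}\mathcal W_\rho(\mu',\nu')\quad\text{and}\quad\mathcal M_\rho(\mu,\nu)=\frac{1}{\lambda}\mathcal M_\rho(\mu',\nu').
\]
For the centred moment, testing the minimisation defining $\sigma_\rho(\nu)$ at the specific point $c=\iota(c^*)$, where $c^*\in\R^{d'}$ realises $\sigma_\rho(\nu')$, gives the one-sided bound $\sigma_\rho(\nu)\le\frac{1}{\lambda}\sigma_\rho(\nu')$ (which is exactly the direction needed).

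Finally, fix $C>C_{\rho,d}$. Applying definition \eqref{defcrhod} to the pair $(\mu,\nu)$ yields $\mathcal M_\rho^\rho(\mu,\nu)\le C\,\mathcal W_\rho(\mu,\nu)\,\sigma_\rho^{\rho-1}(\nu)$. Substituting the three relations above produces
\[
\tfrac{1}{\lambda^\rho}\mathcal M_\rho^\rho(\mu',\nu')\le C\cdot\tfrac{1}{\lambda}\mathcal W_\rho(\mu',\nu')\cdot\tfrac{1}{\lambda^{\rho-1}}\sigma_\rho^{\rho-1}(\nu'),
\]
and the factors $1/\lambda^\rho$ cancel to give $\mathcal M_\rho^\rho(\mu',\nu')\le C\,\mathcal W_\rho(\mu',\nu')\sigma_\rho^{\rho-1}(\nu')$. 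Taking the infimum over admissible $C$ gives $C_{\rho,d'}\le C_{\rho,d}$. The particular case $C_\rho\le C_{\rho,d}$ corresponds to $d'=1$ with $|\cdot|'$ the absolute value and $\lambda=1/|(1,0,\dots,0)|$, which satisfies \eqref{consistencyConditionNorms} by the positive homogeneity of $|\cdot|$. The argument is essentially bookkeeping; the only point demanding a little care is checking that the martingale property transfers in both directions under $(\iota,\iota)_\sharp$, which relies on couplings of $\mu$ and $\nu$ being automatically concentrated on $\iota(\R^{d'})\times\iota(\R^{d'})$.
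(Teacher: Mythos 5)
Your proof is correct and follows essentially the same route as the paper: embed $(\mu',\nu')$ into $\R^d$ via the coordinate injection, use the scaling relations $\mathcal M_\rho(\mu,\nu)=\tfrac1\lambda\mathcal M_\rho(\mu',\nu')$, $\mathcal W_\rho(\mu,\nu)=\tfrac1\lambda\mathcal W_\rho(\mu',\nu')$ and the one-sided bound on the centred moment, apply the definition of $C_{\rho,d}$, and take the infimum (the paper takes it over $c'$, you over admissible $C$ after choosing a minimiser $c^*$ — an immaterial difference). Your explicit verification that the convex order and the martingale property transfer under the embedding just makes precise what the paper leaves implicit, so no further changes are needed.
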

\begin{proof}[Proof of Lemma \ref{CdIsIncreasing}] Let $\mu',\nu'\in\mathcal P_\rho(\R^{d'})$ be such that $\mu'\le_{cx}\nu'$. Let $\mu$ and $\nu$ be the respective images of $\mu'$ and $\nu'$ by the map $\R^{d'}\ni(x_1,\cdots,x_{d'})\mapsto(x_1,\cdots,x_{d'},0,\cdots0)\in\R^{d}$. Let $c'=(c'_1,\cdots,c'_{d'})\in\R^{d'}$ and $c=(c'_1,\cdots,c'_{d'},0,\cdots,0)\in\R^{d}$. By \eqref{consistencyConditionNorms} and the definition of $C_{\rho,d}$, we have
	\begin{align*}
	\mathcal M_\rho^\rho(\mu',\nu')&=\lambda^\rho\mathcal M_\rho^\rho(\mu,\nu)\le C_{\rho,d}\lambda\mathcal W_\rho(\mu,\nu)\left(\lambda^\rho \int_{\R^{d}}\vert y-c\vert^\rho\,\nu(dy)\right)^{(\rho-1)/\rho}
	\\&=C_{\rho,d}\mathcal W_\rho(\mu',\nu')\left(\int_{\R^{d}}\vert y'-c'\vert'^\rho\,\nu'(dy')\right)^{(\rho-1)/\rho}.
	\end{align*}
	
	By taking the infimum over all $c'\in\R^{d'}$, we get $\mathcal M_\rho^\rho(\mu',\nu')\le C_{\rho,d}W_\rho(\mu',\nu')\sigma_\rho^{\rho-1}(\nu')$, hence $C_{\rho,d'}\le C_{\rho,d}$.
	
	In the particular case $d'=1$, since the absolute value on $\R$ is consistent with $\vert\cdot\vert$ for the coefficient $\lambda=\frac{1}{|(1,0,\cdots,0)|}$, we obtain that $ C_\rho\le C_{\rho,d}$.
\end{proof}\begin{Ex}[taken from \cite{BJ}]\label{exbj}
	Let $\mu_n=\frac{1}{n}\sum_{i=1}^n\delta_{(i,0)}$ and $\mu_nP_\theta(dy)=\int_{\R^2}P_\theta(x,dy)\mu_n(dx)$ for $\theta\in [0,\pi)$ where $P_\theta$ denotes the two dimensional Markov kernel defined by $$P_\theta(x,dy)=\frac{1}{2}\left(\delta_{x-(\cos\theta,\sin\theta)}(dy)+\delta_{x+(\cos\theta,\sin\theta)}(dy)\right).
	$$Since $\mu_n(dx)P_0(x,dy)P_\theta(x,dz)$ is a coupling between $\mu_nP_0(dy)$ and $\mu_nP_\theta(dz)$, one has $${\cal W}_\rho(\mu_nP_\theta,\mu_nP_0)\le|(1-\cos\theta,-\sin\theta)|\stackrel{\theta\to 0}{\longrightarrow} 0.$$
	As a consequence $\lim_{\theta\to 0}{\cal W}_\rho(\mu_n,\mu_nP_\theta)={\cal W}_\rho(\mu_n,\mu_nP_0)$ and $\lim_{\theta\to 0}\sigma_\rho(\mu_n P_\theta)=\sigma_\rho(\mu_n P_0)$. let us now compute those two limits. 
	Since $\mu_n$ and $\mu_nP_0=\frac{1}{2n}\left(\delta_{(0,0)}+\delta_{(1,0)}+\delta_{(n,0)}+\delta_{(n+1,0)}\right)+\frac 1n\sum_{i=2}^{n-1}\delta_{(i,0)}$ are both supported on the abscissa axis, using the comotonous coupling, one computes ${\cal W}_\rho(\mu_n,\mu_nP_0)=n^{-\frac 1\rho}|(1,0)|$.
	
	Let $c_\star=(\frac{n+1}{2},0)$. By invariance of $\mu_nP_\theta$ by $x\mapsto 2c_\star-x$ and convexity of the norm, for each $c\in\R^2$,
	\begin{align*}
	\int_{\R^2}&|x-c|^\rho\mu_nP_\theta(dx)=\int_{\R^2}\frac 12(|x-c|^\rho+|2c_\star-x-c|)\mu_nP_\theta(dx)\\&\ge\int_{\R^2}\frac 12(|x-c_\star|^\rho+|2c_\star-x-c_\star|)\mu_nP_\theta(dx)= \int_{\R^2}|x-c_\star|^\rho\mu_nP_\theta(dx).
	\end{align*}
	Hence $\sigma^\rho_\rho(\mu_nP_\theta)=\int_{\R^2}|x-c_\star|^\rho\mu_nP_\theta(dx)$ and $$\frac{\sigma^\rho_\rho(\mu_nP_0)}{|(1,0)|^\rho}=\frac{1}{2^\rho n}\bigg((n+1)^\rho+(n-1)^\rho+2\sum_{i=2}^{\lfloor \frac{n+1} 2\rfloor}(n+1-2i)^\rho\bigg)\sim_{n\to\infty}\frac{n^\rho}{2^{\rho}(\rho+1)}.$$
	According to Lemma 1.1 \cite{BJ}, for $\theta\in (0,\pi)$, $P_\theta$ is the only martingale coupling between $\mu_n$ and $\mu_nP_\theta(dy)$ so that ${\mathcal M}_\rho(\mu_n,\mu_nP_\theta)=|(\cos\theta,\sin\theta)|$ and $\lim_{\theta\to 0+}{\mathcal M}_\rho(\mu_n,\mu_nP_\theta)=|(1,0)|$. Note that this limit is not equal to ${\mathcal M}_\rho(\mu_n,\mu_nP_0)$ when $\rho<2$. Indeed, \begin{align*}
	\frac{1}{n}\sum_{i=2}^{n-1}\delta_{((i,0),(i,0))}+\frac{1}{2n}\bigg(&\frac{n}{n+1}\delta_{((1,0),(0,0))}+\delta_{((1,0),(1,0))}+\frac{1}{n+1}\delta_{((1,0),(n+1,0))}\\&+\frac{1}{n+1}\delta_{((n+1,0),(0,0))}+\delta_{((n,0),(n,0))}+\frac{n}{n+1}\delta_{((n,0),(n+1,0))}\bigg)
	\end{align*}
	is a martingale coupling between $\mu_n$ and $\mu_nP_0$ (in fact, one may easily check that its image by the projection on the first and third coordinates is the only martingale coupling between the first marginals of $\mu_n$ and $\mu_nP_0$ in the family $(M^Q)_{Q\in{\cal Q}}$) so that ${\mathcal M}_\rho(\mu_n,\mu_nP_0)\le \left(\frac{n^\rho+n}{n^2+n}\right)^{\frac 1\rho}
	|(1,0)|$ .
	
	Hence \begin{align*}
	\lim_{\theta\to 0+}\frac{{\mathcal M}^\rho_\rho(\mu_n,\mu_nP_\theta)}{{\mathcal W}_\rho(\mu_n,\mu_nP_\theta)\sigma_\rho^{\rho-1}(\mu_nP_\theta)}=\frac{|(1,0)|}{{\mathcal W}_\rho(\mu_n,\mu_nP_0)\sigma_\rho^{\rho-1}(\mu_nP_0)}\sim_{n\to\infty}2^{\rho-1}(\rho+1)^{\frac{\rho-1}{\rho}}n^{\frac{1}{\rho}-\rho+1}
	\end{align*}
	One has $\frac{1}{\rho}-\rho+1>0$ for $\rho\in\left[1,\frac{1+\sqrt{5}}{2}\right)$ and $\frac{1}{\rho}-\rho+1= 0$ for $\rho=\frac{1+\sqrt{5}}{2}$. Since $C_{\rho,2}\ge \lim_{n\to\infty}\lim_{\theta\to 0+}\frac{{\mathcal M}^\rho_\rho(\mu_n,\mu_nP_\theta)}{{\mathcal W}_\rho(\mu_n,\mu_nP_\theta)\sigma_\rho^{\rho-1}(\mu_nP_\theta)}$, we conclude that $C_{\rho,2}=+\infty$ when $\rho\in\left[1,\frac{1+\sqrt{5}}{2}\right)$ and $C_{\rho,2}\ge 2^{\rho-1}(\rho+1)^{\frac{\rho-1}{\rho}}$ when $\rho=\frac{1+\sqrt{5}}{2}$, whatever the norm $\R^2$ is endowed with.
\end{Ex}\subsection{Extensions of the one dimensional inequality}\label{secext1d}

We first look in Propositions \ref{proptens}, \ref{proprad} and \ref{CdplusPetitque2FonctionH} at extensions of the one dimensional inequality which give the same optimal constant. We begin with the fact that the martingale Wasserstein inequality \eqref{newStabilityInequality} can be tensorised: it holds in greater dimension when the marginals are independent.

\begin{prop2} \label{proptens}Let $d\in\N^*$ and $\mu_1,\nu_1\cdots,\mu_d,\nu_d\in\mathcal P_\rho(\R)$ be such that for all $1\le i\le d$, $\mu_i\le_{cx}\nu_i$. Let $\mu=\mu_1\otimes\cdots\otimes\mu_d$ and $\nu=\nu_1\otimes\cdots\otimes\nu_d$. Then $\mu\le_{cx}\nu$ and
	\begin{equation}\label{newStabilityInequalityInequality}
	\mathcal M_\rho^\rho(\mu,\nu)\le C_\rho \mathcal W_\rho(\mu,\nu)\sigma_\rho^{\rho-1}(\nu),
	\end{equation}
	where $\R^d$ is endowed with the $L^\rho$-norm.
\end{prop2}
\begin{proof} For all $1\le i\le d$, there exists by Lemma \ref{existenceCouplageMartingaleOptimal} below a martingale coupling $M_i\in\Pi^{\textrm M}(\mu_i,\nu_i)$ between $\mu_i$ and $\nu_i$, optimal for $\mathcal M_\rho(\mu_i,\nu_i)$. Let then $M$ be the probability measure on $\R^d\times\R^d$ defined by
	\[
	M(dx,dy)=\mu(dx)\,m_1(x_1,dy_1)\cdots\,m_d(x_d,dy_d)=M_1(dx_1,dy_1)\otimes\cdots\otimes M_d(dx_d,dy_d).
	\]
	
	It is clear that $M$ is a martingale coupling between $\mu$ and $\nu$, which shows that $\mu\le_{cx}\nu$, and
	\begin{align*}
	\mathcal M_\rho^\rho(\mu,\nu)&\le\int_{\R^d\times\R^d}\vert x-y\vert^\rho\,M(dx,dy)=\sum_{i=1}^d\int_{\R^d\times\R^d}\vert x_i-y_i\vert^\rho\,M(dx,dy)\\
	&=\sum_{i=1}^d\int_{\R\times\R}\vert x_i-y_i\vert^\rho\,M_i(dx_i,dy_i).
	\end{align*}
	
	Then for all $c=(c_1,\cdots,c_d)\in\R^d$ we have
	\begin{align}\label{tensorisation1}\begin{split}
	\mathcal M_\rho^\rho(\mu,\nu)&\le\sum_{i=1}^d\int_{\R\times\R}\vert x_i-y_i\vert^\rho\,M_i(dx_i,dy_i)=\sum_{i=1}^d\mathcal M_\rho^\rho(\mu_i,\nu_i)\\
	&\le C_\rho\sum_{i=1}^d\mathcal W_\rho(\mu_i,\nu_i)\left(\int_\R\vert y_i-c_i\vert^\rho\,\nu_i(dy_i)\right)^{(\rho-1)/\rho}\\
	&\le C_\rho\left(\sum_{i=1}^d\mathcal W_\rho^\rho(\mu_i,\nu_i)\right)^{1/\rho}\left(\sum_{i=1}^d\int_\R\vert y_i-c_i\vert^\rho\,\nu_i(dy_i)\right)^{(\rho-1)/\rho},
	\end{split}
	\end{align}
	where for the last inequality we applied Hölder's inequality to the sum over $i$. Let $P\in\Pi(\mu,\nu)$ be a coupling between $\mu$ and $\nu$. For $1\le i\le d$, let $P_i$ be the marginals of $P$ with respect to the coordinates $i$ and $i+d$, so that $P_i$ is a coupling between $\mu_i$ and $\nu_i$. Then
	\begin{align*}
	\sum_{i=1}^d\mathcal W_\rho^\rho(\mu_i,\nu_i)&\le\sum_{i=1}^d\int_{\R\times\R}\vert x_i-y_i\vert^\rho\,P_i(dx_i,dy_i)=\int_{\R^d\times\R^d}\sum_{i=1}^d\vert x_i-y_i\vert^\rho\,P(dx,dy)\\
	&=\int_{\R^d\times\R^d}\vert x-y\vert^\rho\,P(dx,dy).
	\end{align*}
	
	Since the inequality above is true for any coupling $P$ between $\mu$ and $\nu$, we get
	\begin{equation}\label{tensorisation2}
	\sum_{i=1}^d\mathcal W_\rho^\rho(\mu_i,\nu_i)\le\mathcal W_\rho^\rho(\mu,\nu),
	\end{equation}
	which is in fact even an equality according to \cite[Proposition 1.1]{jourdainAlfonsiSPL}. We then deduce from \eqref{tensorisation1} and \eqref{tensorisation2} that
	\begin{align*}
	\mathcal M_\rho^\rho(\mu,\nu)&\le C_\rho \mathcal W_\rho(\mu,\nu)\left(\sum_{i=1}^d\int_\R\vert y_i-c_i\vert^\rho\,\nu_i(dy_i)\right)^{(\rho-1)/\rho}\\
	&=C_\rho \mathcal W_\rho(\mu,\nu)\left(\int_{\R^d}\sum_{i=1}^d\vert y_i-c_i\vert^\rho\,\nu(dy)\right)^{(\rho-1)/\rho}\\
	&=C_\rho \mathcal W_\rho(\mu,\nu)\left(\int_{\R^d}\vert y-c\vert^\rho\,\nu(dy)\right)^{(\rho-1)/\rho}.
	\end{align*}
	
	By taking the infimum over all $c\in\R^d$, we get \eqref{newStabilityInequalityInequality}.
\end{proof}
We next turn to the case when, for some $\alpha\in\R^d$, the images of $\mu$ and $\nu$ by $$\R^d\ni x\mapsto\left(|x-\alpha|,1_{\{|x-\alpha|>0\}}\frac{x-\alpha}{|x-\alpha|}\right)$$ are product measures sharing the same second marginal. This in particular covers the case of radially symmetric measures $\mu$ and $\nu$. \begin{prop2}\label{proprad}
	Let $\R^d$ be endowed with any norm and $\mu,\nu\in\mathcal P_\rho(\R^d)$ be the respective images of $\bar{\mu}(dr)\eta(d\theta)$ and $\bar{\nu}(dr)\eta(d\theta)$ by $(r,\theta)\mapsto \alpha+r\theta$ where $\alpha\in\R^d$, $\bar{\mu},\bar\nu\in\mathcal P_\rho(\R)$ are such that  $\bar{\mu}(\R_+)=\bar\nu(\R_+)=1$ and $\eta\in\mathcal P(\R^d)$ is such that $\eta({\mathbb S}_{d-1})=1$ for ${\mathbb S}_{d-1}=\{\theta\in\R^d:|\theta|=1\}$ and $\eta$ is invariant by $x\mapsto -x$. If $\mu\le_{cx}\nu$, then ${\cal M}_\rho^\rho(\mu,\nu)\le C_\rho{\cal W}_\rho(\mu,\nu)\sigma_\rho^{\rho+1}(\nu)$.
\end{prop2}
\begin{proof}
	Since ${\cal M}_\rho$ and ${\cal W}_\rho$ (resp. $\sigma_\rho$) are (resp. is) preserved by taking the image of its two arguments (resp. its argument) by the translation vector $\alpha$, we suppose without loss of generality that $\alpha=0$.   Let $\tilde\mu(dt),\tilde\nu(du)\in\mathcal P_\rho(\R)$ be the respective images of $\bar\mu(dr)\frac{\delta_{-1}+\delta_{1}}{2}(ds)$ and $\bar\nu(dr)\frac{\delta_{-1}+\delta_{1}}{2}(ds)$ by $(r,s)\mapsto rs$.
	Let $\varphi:\R\to\R$ be a convex function. The function $\R_+\ni r\mapsto\frac{\varphi(-r)+\varphi(r)}{2}$ is non-decreasing and convex so that the function $f:\R^d\to\R$ defined by $f(x)=\frac{\varphi(-|x|)+\varphi(|x|)}{2}$ is convex. We have
	\begin{align*}
	\int_{\R^d}f(x)\mu(dx)=\int_{\R_+\times{\mathbb S}_{d-1}}\frac{\varphi(-|r\theta|)+\varphi(|r\theta|)}{2}\bar\mu(dr)\eta(d\theta)=\int_{\R}\varphi(t)\tilde\mu(dt)
	\end{align*}
	and, in the same way, $\int_{\R^d}f(y)\nu(dy)=\int_{\R}\varphi(u)\tilde\nu(du)$ so that $\mu\le_{cx}\nu$ implies that $\tilde\mu\le_{cx}\tilde\nu$. Let $\widetilde M\in\Pi^M(\tilde\mu,\tilde\nu)$ be optimal for ${\cal M}_\rho(\tilde\mu,\tilde\nu)$ and $M(dx,dy)$ denote the image of $\widetilde M(dt,du)\eta(d\theta)$ by $(t,u,\theta)\mapsto(t\theta,u\theta)$. The marginals of $M$ are the respective images of $\bar\mu(dr)\frac{\delta_{-1}+\delta_{1}}{2}(ds)\eta(d\theta)$ and $\bar\nu(dr)\frac{\delta_{-1}+\delta_{1}}{2}(ds)\eta(d\theta)$ by $(r,s,\theta)\mapsto rs\theta$. As the image of $\frac{\delta_{-1}+\delta_{1}}{2}(ds)\eta(d\theta)$ by $(s,\theta)\mapsto s\theta$  is equal to $\eta$, they are equal to $\mu(dx)$ and $\nu(dy)$. Moreover, for $\psi:\R^d\to\R$ measurable and bounded, using the martingale property of $\widetilde M$ for the second equality, we obtain
	\begin{align*}
	\int_{\R^d\times\R^d}\psi(x)y M(dx,dy)&=\int_{\R^d}\int_{\R\times\R}\psi(t\theta)u\theta \widetilde M(dt,du)\eta(d\theta)\\&=\int_{\R^d}\int_{\R}\psi(t\theta)t\theta\tilde\mu(dt)\eta(d\theta)=\int_{\R^d}\psi(x)x\mu(dx)
	\end{align*}
	so that $M\in\Pi^M(\mu,\nu)$. As a consequence,
	\begin{align}
	{\cal M}_\rho^\rho(\mu,\nu)&\le\int_{\R^d\times\R^d}|x-y|^\rho M(dx,dy)=\int_{\R\times\R\times{\cal S}_{d-1}}|t\theta-u\theta|^\rho\widetilde M(dt,du)\eta(d\theta)\notag\\&=\int_{\R\times\R}|t-u|^\rho\widetilde M(dt,du)={\cal M}_\rho^\rho(\tilde \mu,\tilde \nu)\le C_\rho{\cal W}_\rho(\tilde\mu,\tilde\nu)\sigma_\rho^{\rho-1}(\tilde\nu).\label{com1d}
	\end{align}
	For $y,c\in\R^d$, by the triangle inequality $|y|\le \frac{|y-c|+|y+c|}{2}$ so that by Jensen's inequality, $|y|^\rho\le \frac{|y-c|^\rho+|-y-c|^\rho}{2}$. Therefore for $(r,\theta)\in\R_+\times\R^d
	$, $\frac{|r\theta|^\rho+|-r\theta|^\rho}{2}\le \frac{|r\theta-c|^\rho+|-r\theta-c|^\rho}{2}$ so that \begin{align*}
	\int_{\R^d}|y|^\rho\nu(dy)&=\int_{\R_+\times\{-1,1\}\times\R^d
	}|rs\theta|^\rho\bar\nu(dr)\frac{\delta_{-1}+\delta_{1}}{2}(ds)\eta(d\theta)\\&\le \int_{\R_+\times\{-1,1\}\times\R^d
	}|rs\theta-c|^\rho\bar\nu(dr)\frac{\delta_{-1}+\delta_{1}}{2}(ds)\eta(d\theta)=\int_{\R^d}|y-c|^\rho\nu(dy).
	\end{align*}
	As a consequence, \begin{equation}
	\sigma_\rho^{\rho}(\nu)=\int_{\R^d}|y|^\rho\nu(dy)=\int_{\R\times{\cal S}_{d-1}}|u\theta|^\rho \tilde\nu(du)\eta(d\theta)=\int_{\R}|u|^\rho\tilde\nu(du)\ge\sigma_\rho^{\rho}(\tilde\nu),\label{compmomcentr}
	\end{equation}
	where the last inequality is in fact an equality by a reasoning similar to the one which just lead to the first equality.
	
	The image of $P\in\Pi(\mu,\nu)$ optimal for ${\cal W}_\rho(\mu,\nu)$ by $\R^d\times\R^d\ni(x,y)\mapsto(|x|,|y|)$ belongs to $\Pi(\bar\mu,\bar\nu)$ so that, with the inequality $|x-y|\ge||x|-|y||$ deduced from the triangle inequality,
	$${\cal W}_\rho^\rho(\mu,\nu)=\int_{\R^d\times\R^d}|x-y|^\rho P(dx,dy)\ge \int_{\R^d\times\R^d}||x|-|y||^\rho P(dx,dy)\ge {\cal W}_\rho^\rho(\bar\mu,\bar\nu).$$
	On the other hand, for $\bar P\in\Pi(\bar\mu,\bar \nu)$ optimal for ${\cal W}_\rho(\bar\mu,\bar\nu)$, the image of $\bar P(dr,dv)\frac{\delta_{-1}+\delta_{1}}{2}(ds)$ by $(r,v,s)\mapsto (rs,vs)$ belongs to $\Pi(\tilde \mu,\tilde \nu)$ so that
	$${\cal W}_\rho^\rho(\bar\mu,\bar\nu)=\int_{\R\times\R\times\{-1,1\}}|rs-vs|^\rho \bar P(dr,dv)\frac{\delta_{-1}+\delta_{1}}{2}(ds)\ge {\cal W}_\rho^\rho(\tilde\mu,\tilde\nu).$$
	Hence ${\cal W}_\rho(\mu,\nu)\ge {\cal W}_\rho(\bar\mu,\bar\nu)\ge {\cal W}_\rho(\tilde\mu,\tilde\nu)$ where the first (resp. second) inequality can be proved to be an equality by an adaptation of the reasoning leading to the second (resp. first) one.
	Plugging ${\cal W}_\rho(\mu,\nu)\ge {\cal W}_\rho(\tilde\mu,\tilde\nu)$ together with \eqref{compmomcentr} into \eqref{com1d}, we conclude that  ${\cal M}_\rho^\rho(\mu,\nu)\le C_\rho{\cal W}_\rho(\mu,\nu)\sigma_\rho^{\rho-1}(\nu)$.
	
\end{proof}
We now look at two measures $\mu$ and $\nu$ such that for $X$ distributed according to $\mu$, there exists $\lambda\ge0$ such that $\nu$ is the probability distribution of $X+\lambda(X-\E[X])$ and the conditional probability distribution of $X$ given the direction of $X-\E[X]$ has mean $\E[X]$. In order to transcribe formally the latter condition, we give the following definition.

\begin{def2}\label{defDirectionDependent} Let $d\in\N^*\backslash\{1\}$ and $H:\R^d\to\R^d$ be a measurable map such that $H(\R^d)$ is a Borel subset of $\R^d$. We say that $H$ is \emph{direction-dependent} iff $\vert H(x)\vert=1$ for all $x\in\R^d$ and
	\[
	\forall x,y\in\R^d\backslash\{0\},\quad H(x)=H(y)\iff y\in\operatorname{Span}(x).
	\]
\end{def2}

In dimension $d\in\N^*\backslash\{1\}$, a natural example of a direction-dependent map $H:\R^d\to\R^d$ is given by the one defined for all $x=(x_1,\cdots,x_d)\in\R^d\backslash\{0\}$ by
\begin{equation}\label{defFonctionH}
\hspace{-4pt}\left\{
\begin{array}{rll}
H(x)&=\phantom{-}\frac{x}{\vert x\vert}&\textrm{if}\ x_1>0\textrm{ or there exists $i\in\{1,\cdots,d-1\}$ such that $x_1=\cdots=x_i=0$ and $x_{i+1}>0$};\\
H(x)&=-\frac{x}{\vert x\vert}& \textrm{otherwise},
\end{array}
\right.
\end{equation}
and $H(0)$ is any vector with norm $1$.

\begin{prop2}\label{CdplusPetitque2FonctionH} Let $d\in\N^*\backslash\{1\}$, $r\in[1,+\infty]$ and $\R^d$ be endowed with the $L^r$-norm. Let $\rho\ge1$, $\mu\in\mathcal P_\rho(\R^d)$ be with mean $\alpha\in\R^d$, $\lambda:\R^d\to\R_+$, $H:\R^d\to\R^d$ be a direction-dependent measurable map in the sense of Definition \ref{defDirectionDependent} and $\nu$ be the image of $\mu$ by the map $x\mapsto x+\lambda(H(x-\alpha))(x-\alpha)=\alpha+(1+\lambda(H(x-\alpha)))(x-\alpha)$.
	
	If $\E[\vert\lambda(H(X-\alpha))(X-\alpha)\vert^\rho]<+\infty$ and $\E[X\vert H(X-\alpha)]=\alpha$ almost surely for $X$ distributed according to $\mu$, then $\mu\le_{cx}\nu$. If moreover $\lambda$ is constant, then
	\begin{equation}\label{CdplusPetitque2FonctionHInequality}
	\mathcal M_\rho^\rho(\mu,\nu)\le C_\rho \mathcal W_\rho(\mu,\nu)\sigma_\rho^{\rho-1}(\nu).
	\end{equation}
\end{prop2}
\begin{rk} Suppose that $\mu\in\mathcal P_\rho(\R^d)$ is symmetric with mean $\alpha\in\R^d$, that is $(x-\alpha)_\sharp\mu(dx)=(\alpha-x)_\sharp\mu(dx)$. Let $H$ be defined by \eqref{defFonctionH} and $X$ be distributed according to $\mu$. Then $(X-\alpha,H(X-\alpha))\overset{d}{=}(\alpha-X,H(\alpha-X))=(\alpha-X,H(X-\alpha))$, so $\E[X-\alpha\vert H(X-\alpha)]=\E[\alpha-X\vert H(X-\alpha)]$ a.s., hence $\E[X\vert H(X-\alpha)]=\alpha$ a.s.
\end{rk}

The proof of Proposition \ref{CdplusPetitque2FonctionH} relies on the following lemma, whose proof is deferred to Section \ref{sec:Lemmas}, which explains why we can endow $\R^d$ with the $L^r$-norm for $r\in[1,\infty]$. In the case $r=2$ of the Euclidean norm, the result is a simple property of the orthogonal projection.
\begin{lemma}\label{lemmaProjectionNorme} Let $d\in\N^*\backslash\{1\}$, $r\in[1,+\infty]$, $\R^d$ be endowed with the $L^r$-norm, $\mathbb S^{d-1}=\{a\in\R^d\mid\vert a\vert=1\}$ and $\operatorname{sgn}:\R\to\R,x\mapsto\1_{\{x\ge0\}}-\1_{\{x<0\}}$. For all $a=(a_1,\cdots,a_d)\in\mathbb S^{d-1}$ and $c=(c_1,\cdots,c_d)\in\R^d$, let $c_a$ be defined by
	\begin{equation}\label{defcindicea}
	c_a=\left\{
	\begin{array}{rcl}
	\left(\sum_{i=1}^dc_i\operatorname{sgn}(a_i)\vert a_i\vert^{r-1}\right)a&\text{ if }&r<+\infty\\
	c_i\operatorname{sgn}(a_i)a&\text{ if }&r=+\infty,\text{ where $i=\min\{j\in\{1,\cdots,d\}\mbox{ such that }\vert a_j\vert=1\}$}.
	\end{array}
	\right.
	\end{equation}
	
	Then
	\[
	\forall a\in\mathbb S^{d-1},\quad\forall c\in\R^d,\quad\forall y\in\operatorname{Span}(a),\quad\vert y-c_a\vert\le\vert y-c\vert.
	\]
\end{lemma}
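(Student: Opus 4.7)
The plan is to reduce the inequality $|y - c_a| \le |y - c|$ to a one-dimensional comparison by parametrising $y = ta$ for some $t\in\R$. Since $|a|_r = 1$, writing $c_a = s\,a$ with $s\in\R$ simplifies the left-hand side to $|ta - c_a|_r = |t-s|\,|a|_r = |t-s|$, so it suffices to show $|t-s| \le |ta - c|_r$ for every $t\in\R$.

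For $r<+\infty$, the definition \eqref{defcindicea} reads $s = \sum_{i=1}^d c_i\operatorname{sgn}(a_i)|a_i|^{r-1}$. I would first exploit the identities $|a_i|^r = a_i\operatorname{sgn}(a_i)|a_i|^{r-1}$ and $\sum_i |a_i|^r = 1$ to rewrite
\[t = \sum_{i=1}^d ta_i \operatorname{sgn}(a_i)|a_i|^{r-1},\qquad\text{so}\qquad t-s = \sum_{i=1}^d (ta_i - c_i)\operatorname{sgn}(a_i)|a_i|^{r-1}.\]
Then Hölder's inequality with conjugate exponents $r$ and $r/(r-1)$ (replaced by the triangle inequality when $r=1$), together with $\sum_i |a_i|^r = 1$ used a second time, delivers
\[|t-s| \le \Bigl(\sum_{i=1}^d |ta_i-c_i|^r\Bigr)^{1/r}\Bigl(\sum_{i=1}^d |a_i|^r\Bigr)^{(r-1)/r} = |ta - c|_r,\]
which concludes this case.

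For $r=+\infty$, the definition yields $c_a = c_{i_0}\operatorname{sgn}(a_{i_0})\,a$ with $|a_{i_0}|=1$, so $|ta - c_a|_\infty = |t - c_{i_0}\operatorname{sgn}(a_{i_0})|\cdot\max_j|a_j| = |t - c_{i_0}\operatorname{sgn}(a_{i_0})|$ since $|a|_\infty = 1$. Conversely, estimating $|ta-c|_\infty$ from below by the absolute value of its $i_0$-th coordinate and using that $|a_{i_0}|=1$ implies $1/a_{i_0} = \operatorname{sgn}(a_{i_0})$ yields $|ta-c|_\infty \ge |ta_{i_0} - c_{i_0}| = |t - c_{i_0}\operatorname{sgn}(a_{i_0})|$, completing the proof.

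The only step requiring any insight is recognising that the formula \eqref{defcindicea} has been engineered so that $t-s$ becomes a duality pairing of the residual $(ta_i-c_i)_i$ against the vector $(\operatorname{sgn}(a_i)|a_i|^{r-1})_i$, whose $L^{r/(r-1)}$-norm is exactly $1$; once this identity is spotted, Hölder's inequality furnishes the bound, while the $r=+\infty$ case reduces to an elementary comparison on the single coordinate $i_0$.
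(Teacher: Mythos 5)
Your proof is correct and follows essentially the same route as the paper: reduce to the scalar coefficient along $a$ using $|a|_r=1$, rewrite $t-s$ (via $\sum_i|a_i|^r=1$) as a pairing of $(ta_i-c_i)_i$ against $(\operatorname{sgn}(a_i)|a_i|^{r-1})_i$, and conclude by H\"older's inequality with exponents $r$ and $r/(r-1)$, the $r=+\infty$ case being handled by the single coordinate where $|a_i|=1$ exactly as in the paper. The only cosmetic difference is that the paper first applies the triangle inequality termwise and then H\"older, whereas you apply H\"older directly to the signed sum; both are equivalent.
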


\begin{proof}[Proof of Proposition \ref{CdplusPetitque2FonctionH}] Up to replacing $\mu$ and $\nu$ by their respective images by the map $x\mapsto x-\alpha$, we may suppose without loss of generality that $\alpha=0$.
	
	Let $(p(a,dx))_{a\in H(\R^d)}$ be a probability kernel such that $( H_\sharp\mu)(da)\,p(a,dx)$ is the image of $\mu$ by the map $x\mapsto( H(x),x)$. For all $a\in H(\R^d)$, let $\widetilde p(a,dy)$ be the image of $p(a,dx)$ by the map $x\mapsto(1+\lambda(H(x)))x$. For all $a\in H(\R^d)$, let $q(a,\cdot)$, resp. $\widetilde q(a,\cdot)$, be the image of $p(a,\cdot)$, resp. $\widetilde p(a,\cdot)$, by the map $y\mapsto\langle y,a\rangle$. We have
	\begin{align*}
	&\int_{H(\R^d)}\left(\int_{\R^d}\vert x\vert^\rho\,p(a,dx)\right)\,(H_\sharp\mu)(da)=\int_{\R^d}\vert x\vert^\rho\,\mu(dx)<+\infty,\\
	\textrm{and}\quad&\int_{H(\R^d)}\left(\int_{\R^d}\vert y\vert^\rho\,\tilde p(a,dy)\right)\,(H_\sharp\mu)(da)=\int_{\R^d}\vert(1+\lambda(H(x)))x\vert^\rho\,\mu(dx)<+\infty,
	\end{align*}
	so $H_\sharp\mu(da)$-almost everywhere, $p(a,\cdot)$ and $q(a,\cdot)$, and therefore $\widetilde p(a,\cdot)$ and $\widetilde q(a,\cdot)$, belong to $\mathcal P_\rho(\R^d)$. Moreover we see by definition of $H$ that for $H_\sharp\mu(da)$-almost all $a\in H(\R^d)$, $p(a,\operatorname{Span}(a))=1$, so
	\[
	p(a,dx)=\int_\R\delta_{ta}(dx)\,q(a,dt)\quad\text{and}\quad\widetilde p(a,dy)=\int_\R\delta_{sa}(dy)\,\widetilde q(a,ds).
	\]
	
	By assumption, we have $H_\sharp\mu(da)$-almost everywhere
	\[
	a\int_\R t\,q(a,dt)=\int_{\R^d}x\,p(a,dx)=0.\]
	
	Since $\widetilde q(a,\cdot)$ is the image of $q(a,\cdot)$ by the map $y\mapsto(1+\lambda(H(y)))y$, or equivalently by the map $y\mapsto(1+\lambda(a))y$, by Lemma \ref{scalingCouplageMartingale} below, for $H_\sharp\mu(da)$-almost all $a\in H(\R^d)$, $q(a,\cdot)\le_{cx}\widetilde q(a,\cdot)$. Up to replacing $q(a,\cdot)$ and therefore $\widetilde q(a,\cdot)$ by $\delta_0$ on a $H_\sharp\mu$-null set, we may suppose without loss of generality that $q(a,\cdot),\widetilde q(a,\cdot)\in\mathcal P_\rho(\R)$ and $q(a,\cdot)\le_{cx}\widetilde q(a,\cdot)$ for all $a\in H(\R^d)$. By \cite[Theorem 19.12]{Ch06} and since ${\cal P}_\rho(\R^d)$ is a closed subset of ${\cal P}(\R^d)$ endowed with the weak convergence topology, the map $H(\R^d)\ni a\mapsto (q(a,\cdot),\tilde q(a,\cdot))\in{\cal P}_\rho(\R)\times{\cal P}_\rho(\R)$ is measurable when the codomain is endowed with the product $\mathcal B\otimes\mathcal B$ of the Borel $\sigma$-algebra $\mathcal B$ on ${\cal P}_\rho(\R)$ associated with the weak convergence topology. With Lemma \ref{measCouplageMartingaleOptimal} below, we deduce that there exists a measurable map $H(\R^d)\ni a\mapsto M^a\in\mathcal P(\R\times\R)$  endowed with the $\sigma$-field generated by the weak convergence topology such that for each $a\in H(\R^d)$, $M^a$ belongs to $\Pi^{\textrm M}(q(a,\cdot),\widetilde q(a,\cdot))$ and is optimal for $\mathcal M_\rho(q(a,\cdot),\widetilde q(a,\cdot))$. Let $\widetilde M^a$ be the image of $M^a$ by the map $(t,s)\mapsto(ta,sa)$. Then the map $a\mapsto\widetilde M^a$ is also measurable, which is equivalent to say (see again \cite[Theorem 19.12]{Ch06}) that $(\widetilde M^a)_{a\in H(\R^d)}$ is a probability kernel from $H(\R^d)$ to $\R^d\times\R^d$. Hence we can define
	\[
	\overline M(dx,dy)=\int_{a\in H(\R^d)}\widetilde M^a(dx,dy)\,(H_\sharp\mu)(da).
	\]
	For all $a\in H(\R^d)$, $M^a$ is a martingale coupling between $q(a,\cdot)$ and $\widetilde q(a,\cdot)$, hence we easily see that $\widetilde M^a$ is a martingale coupling between the respective images of $q(a,\cdot)$ and $\widetilde q(a,\cdot)$ by the map $t\mapsto at$, namely $p(a,\cdot)$ and $\widetilde p(a,\cdot)$. Therefore one can also readily show that $\overline M$ is a martingale coupling between $\int_{a\in H(\R^d)}p(a,dx)\,(H_\sharp\mu)(da)=\mu(dx)$ and $\int_{a\in H(\R^d)}\widetilde p(a,dy)\,(H_\sharp\mu)(da)=\nu(dy)$.	Consequently, 
	\begin{align}\label{calculMrhorho1}\begin{split}
	\mathcal M_\rho^\rho&(\mu,\nu)\le\int_{\R^d\times\R^d}\vert y-x\vert^\rho\,\overline M(dx,dy)=\int_{H(\R^d)}\left(\int_{\R^d\times\R^d}\vert y-x\vert^\rho\,\widetilde M^a(dx,dy)\right)\,(H_\sharp\mu)(da)\\
	&=\int_{H(\R^d)}\left(\int_{\R\times\R}\vert s-t\vert^\rho\,M^a(dt,ds)\right)\,(H_\sharp\mu)(da)=\int_{H(\R^d)}\mathcal M_\rho^\rho(q(a,\cdot),\widetilde q(a,\cdot))\,(H_\sharp\mu)(da).
	\end{split}
	\end{align}
	
	Let $c\in\R^d$. For all $a\in H(\R^d)$, let $c_a$ be defined by \eqref{defcindicea} and $s_a\in\R$ be such that $c_a=s_aa$. If the map $\lambda$ is constant equal to some $\lambda\in\R_+$ (with a slight abuse of notation), then using the definition of $C_\rho$ for the first inequality, Lemma \ref{valeurW1scaling} below for the first equality, Lemma \ref{lemmaProjectionNorme} for the second inequality, Hölder's inequality for the third inequality and Lemma \ref{valeurW1scaling} again for the last equality (there the constancy of $\lambda$ plays a crucial role), we deduce that
	\begin{align}\label{calculMrhorho2}\begin{split}
	\mathcal M_\rho^\rho(\mu,\nu)&\le\int_{H(\R^d)}C_\rho\mathcal W_\rho(q(a,\cdot),\widetilde q(a,\cdot))\left(\int_\R\vert s-s_a\vert^\rho\,\widetilde q(a,ds)\right)^{(\rho-1)/\rho}\,(H_\sharp\mu)(da)\\
	&=C_\rho\lambda\int_{H(\R^d)}\left(\int_\R\vert t\vert^\rho\,q(a,dt)\right)^{1/\rho}\left(\int_\R\vert sa-s_aa\vert^\rho\,\widetilde q(a,ds)\right)^{(\rho-1)/\rho}\,(H_\sharp\mu)(da)\\
	&=C_\rho\lambda\int_{H(\R^d)}\left(\int_{\R^d}\vert x\vert^\rho\,p(a,dx)\right)^{1/\rho}\left(\int_{\R^d}\vert y-c_a\vert^\rho\,\widetilde p(a,dy)\right)^{(\rho-1)/\rho}\,(H_\sharp\mu)(da)\\
	&\le C_\rho\lambda\int_{H(\R^d)}\left(\int_{\R^d}\vert x\vert^\rho\,p(a,dx)\right)^{1/\rho}\left(\int_{\R^d}\vert y-c\vert^\rho\,\widetilde p(a,dy)\right)^{(\rho-1)/\rho}\,(H_\sharp\mu)(da)\\
	&\le C_\rho\lambda\left(\int_{H(\R^d)}\int_{\R^d}\vert x\vert^\rho\,p(a,dx)\,(H_\sharp\mu)(da)\right)^{1/\rho}\\
	&\phantom{\le\ }\times\left(\int_{H(\R^d)}\int_{\R^d}\vert y-c\vert^\rho\,\widetilde p(a,dy)\,(H_\sharp\mu)(da)\right)^{(\rho-1)/\rho}\\
	&\le C_\rho\lambda\left(\int_{\R^d}\vert x\vert^\rho\,\mu(dx)\right)^{1/\rho}\left(\int_{\R^d}\vert (1+\lambda)x-c\vert^\rho\,\mu(dx)\right)^{(\rho-1)/\rho}\\
	&=C_\rho\mathcal W_\rho(\mu,\nu)\left(\int_{\R^d}\vert y-c\vert^\rho\,\nu(dy)\right)^{(\rho-1)/\rho}.
	\end{split}
	\end{align}
	
	By taking the infimum over all $c\in\R^d$, we get \eqref{CdplusPetitque2FonctionHInequality}.

\end{proof}

\subsection{The scaling case}\label{secscal}

We call scaling case the situation in which two measures $\mu$ and $\nu$ are such that for $X$ distributed according to $\mu$, there exists $\lambda\ge0$ such that $\nu$ is the probability distribution of $X+\lambda(X-\E[X])=\E[X]+(1+\lambda)(X-\E[X])$. In the previous section we already considered this case under an additional assumption on the conditional probability distribution of $X$, see Proposition \ref{CdplusPetitque2FonctionH}. We release here the latter constraint and study the impact on the constant $C$ in \eqref{newStabilityInequalityGalDimension}. 

\begin{prop2}\label{scalingConstant3} Let $d\in\N^*$ and $\R^d$ be endowed with any norm. Let $\rho\ge1$, $\lambda\ge0$ and $\mu\in\mathcal P_\rho(\R^d)$ be with mean $\alpha\in\R^d$. Let $\nu$ be the image of $\mu$ by the map $x\mapsto x+\lambda(x-\alpha)$. Then
	\begin{equation}\label{scalingConstant3Inequality}
	\mathcal M_\rho^\rho(\mu,\nu)\le2^{\rho-1}\frac{3+\lambda}{1+\lambda}\mathcal W_\rho(\mu,\nu)\sigma_\rho^{\rho-1}(\nu).
	\end{equation}
\end{prop2}
\begin{rk} Suppose that there exists a direction-dependent measurable map $H:\R^d\to\R^d$ in the sense of Definition \ref{defDirectionDependent} such that for $X$ distributed according to $\mu$, $\E[X\vert H(X-\alpha)]=\alpha$ almost surely. Then by Proposition \ref{CdplusPetitque2FonctionH}, we see that $2^{\rho-1}\frac{3+\lambda}{1+\lambda}$ could be replaced in \eqref{scalingConstant3Inequality} with $C_\rho$. In view of \eqref{encadrementCrho} and Remark \ref{rknewStabilityInequalityMajoration}, for $\rho\in(1,2)$, $C_\rho>2^{\rho-1}$ so $2^{\rho-1}\frac{3+\lambda}{1+\lambda}$ is sharper for $\lambda$ in a neighbourhood of $+\infty$. However, the smallest constant independent of $\lambda$ induced by \eqref{scalingConstant3Inequality} is $3\times2^{\rho-1}$, which is greater than $C_\rho$ by Proposition \ref{newStabilityInequalityMajoration} \ref{it:encadrementCrho} (using $2\le 2^\rho=2\times 2^{\rho-1}$ when $\rho\in(1,2)$).
\end{rk}
\begin{proof} For all $x\in\R^d$, let $m(x,dy)$ be the probability kernel defined by
	\begin{equation}
	m(x,dy)=\frac{1}{1+\lambda}\delta_{x+\lambda(x-\alpha)}(dy)+\frac{\lambda}{1+\lambda}\nu(dy).\label{defkerscal}
	\end{equation}
	For all measurable and bounded map $h:\R^d\to\R$, we have
	\begin{align*}
	\int_{\R^d\times\R^d}h(y)\,\mu(dx)\,m(x,dy)&=\frac{1}{1+\lambda}\int_{\R^d}h(x+\lambda(x-\alpha))\,\mu(dx)+\frac{\lambda}{1+\lambda}\int_{\R^d}h(y)\,\nu(dy)\\
	&=\int_{\R^d}h(y)\,\nu(dy).
	\end{align*}
	
	Moreover, for all $x\in\R^d$,
	\begin{align*}
	\int_{\R^d}y\,m(x,dy)&=\frac{1}{1+\lambda}(x+\lambda(x-\alpha))+\frac{\lambda}{1+\lambda}\int_{\R^d}(x'+\lambda(x'-\alpha))\,\mu(dx')\\
	&=\frac{1}{1+\lambda}(x+\lambda(x-\alpha))+\frac{\lambda}{1+\lambda}\alpha=x.
	\end{align*}
	
	So $\mu(dx)\,m(x,dy)$ is a martingale coupling between $\mu$ and $\nu$, and
	\begin{align*}
	\mathcal M_\rho^\rho(\mu,\nu)&\le\int_{\R\times\R}\vert y-x\vert^\rho\,\mu(dx)\,m(x,dy)\\
	&=\frac{1}{1+\lambda}\int_{\R^d}\lambda^\rho\vert x-\alpha\vert^\rho\,\mu(dx)+\frac{\lambda}{1+\lambda}\int_{\R^d\times\R^d}\vert y-x\vert^\rho\,\mu(dx)\,\nu(dy).
	\end{align*}
	
	On the one hand, using Lemma \ref{valeurW1scaling} below and the fact that $\mu(dx)\,\nu(dy)$ is a coupling between $\mu$ and $\nu$, we have
	\begin{align*}
	\int_{\R^d}\vert x-\alpha\vert^\rho\,\mu(dx)&=\frac{1}{\lambda^\rho}\mathcal W_\rho^\rho(\mu,\nu)=\frac{1}{\lambda^\rho}\mathcal W_\rho(\mu,\nu)\mathcal W_\rho^{\rho-1}(\mu,\nu)\\
	&\le\frac{1}{\lambda^\rho}\mathcal W_\rho(\mu,\nu)\left(\int_{\R^d\times\R^d}\vert y-x\vert^\rho\,\mu(dx)\,\nu(dy)\right)^{(\rho-1)/\rho}.
	\end{align*}
	
	On the other hand, Minkowski's inequality and Lemma \ref{valeurW1scaling} below yield
	\begin{align*}
	&\int_{\R^d\times\R^d}\vert y-x\vert^\rho\,\mu(dx)\,\nu(dy)\\
	&=\left(\int_{\R^d\times\R^d}\vert y-x\vert^\rho\,\mu(dx)\,\nu(dy)\right)^{\frac1\rho}\left(\int_{\R^d\times\R^d}\vert y-x\vert^\rho\,\mu(dx)\,\nu(dy)\right)^{\frac{\rho-1}{\rho}}\\
	&\le\left(\left(\int_{\R^d}\vert x-\alpha\vert^\rho\,\mu(dx)\right)^{\frac1\rho}+\left(\int_{\R^d}\vert y-\alpha\vert^\rho\,\nu(dy)\right)^{\frac1\rho}\right)\left(\int_{\R^d\times\R^d}\vert y-x\vert^\rho\,\mu(dx)\,\nu(dy)\right)^{\frac{\rho-1}{\rho}}\\
	&=(2+\lambda)\left(\int_{\R^d}\vert x-\alpha\vert^\rho\,\mu(dx)\right)^{\frac1\rho}\left(\int_{\R^d\times\R^d}\vert y-x\vert^\rho\,\mu(dx)\,\nu(dy)\right)^{{\frac{\rho-1}{\rho}}}\\
	&=\frac{2+\lambda}{\lambda}\mathcal W_\rho(\mu,\nu)\left(\int_{\R^d\times\R^d}\vert y-x\vert^\rho\,\mu(dx)\,\nu(dy)\right)^{{\frac{\rho-1}{\rho}}}.
	\end{align*}
	
	We deduce that
	\[
	\mathcal M_\rho^\rho(\mu,\nu)\le\frac{3+\lambda}{1+\lambda}\mathcal W_\rho(\mu,\nu)\left(\int_{\R^d\times\R^d}\vert y-x\vert^\rho\,\mu(dx)\,\nu(dy)\right)^{(\rho-1)/\rho}.
	\]
	
	Using Minkowski's inequality and the definition of convex order, for all $c\in\R$ we get
	\begin{align*}
	&\left(\int_{\R^d\times\R^d}\vert y-x\vert^\rho\,\mu(dx)\,\nu(dy)\right)^{(\rho-1)/\rho}\\
	&\le\left(\left(\int_{\R^d}\vert x-c\vert^\rho\,\mu(dx)\right)^{1/\rho}+\left(\int_{\R^d}\vert y-c\vert^\rho\,\nu(dy)\right)^{1/\rho}\right)^{\rho-1}\\
	&\le2^{\rho-1}\left(\int_{\R^d}\vert y-c\vert^\rho\,\nu(dy)\right)^{(\rho-1)/\rho}.
	\end{align*}
	
	By taking the infimum over all $c\in\R$, we get
	\[
	\mathcal M_\rho^\rho(\mu,\nu)\le2^{\rho-1}\frac{3+\lambda}{1+\lambda}\mathcal W_\rho(\mu,\nu)\sigma_\rho^{\rho-1}(\nu).
	\]
\end{proof}
As already seen in the previous proof, in the scaling case, by Lemma
\ref{valeurW1scaling} below, $\mathcal W^\rho_\rho(\mu,\nu)=\lambda^\rho\int_{\R^d}\vert x-\alpha\vert ^\rho\mu(dx)$. On the other hand, $$\sigma^\rho_\rho(\nu)\le \int_{\R^d}\vert y-\alpha\vert ^\rho\nu(dy)=(1+\lambda)^\rho\int_{\R^d}\vert x-\alpha\vert ^\rho\mu(dx)$$ so that
\begin{equation}
C_{\rho,d}\ge \frac{{\cal M}_\rho^\rho(\mu,\nu)}{\lambda(1+\lambda)^{\rho-1}\int_{\R^d}\vert x-\alpha\vert ^\rho\mu(dx)}.\label{minocro}
\end{equation} When, moreover, $\nu$ is supported on an affine basis of $\R^d$, there is a single martingale coupling between $\mu$ and $\nu$ (thus given by $\mu(dx)m(x,dy)$ with the kernel $m$ defined in \eqref{defkerscal}) and we are in a good position to derive a lower-bound for the constant $C_{\rho,d}$ defined in \eqref{defcrhod}. In the next example, we exploit this idea in dimension $d=2$.

\begin{Ex}\label{extri}
	Let $n\in\N^*$, $i=(0,0)$, $j=(1,0)$, $k=(\frac12,\frac{1}{n})$, $p=\frac{1}{2n}$, $q=\frac{1}{2n}$, $r=1-\frac1n$ and $\alpha=pi+qj+rk=(\frac 12,\frac 1 n-\frac 1{n^2})$. We set $\mu=p\delta_i+q\delta_j+r\delta_k$ and define $\nu$ as the image of $\mu$ by $x\mapsto x+\lambda (x-\alpha)$ with $\lambda>0$
	.  By the definition \eqref{defkerscal} of the kernel $m$, we have 
	\begin{align*}
	{\cal M}^\rho_\rho(\mu,\nu)=\frac{\lambda^\rho}{1+\lambda}\int_{\R^2}\vert x-\alpha\vert ^\rho\mu(dx)+\frac{\lambda}{1+\lambda}\int_{\R^2\times\R^2}|x-y|^\rho\mu(dx)\nu(dy)\end{align*}
	where $\int_{\R^2\times\R^2}|x-y|^\rho\mu(dx)\nu(dy)$ goes to $\int_{\R^2\times\R^2}|x-y|^\rho\mu(dx)\mu(dy)$ as $\lambda \to 0+$. Taking the limit $\lambda\to 0+$ in \eqref{minocro}, we deduce that
	$$C_{\rho,2}\ge \1_{\{\rho=1\}}+\frac{\int_{\R^2\times\R^2}|x-y|^\rho\mu(dx)\mu(dy)}{\int_{\R^2}|x-\alpha|^\rho\mu(dx)}.$$
	As $n\to\infty$, whatever the norm $|\cdot|$ on $\R^2$, $\vert k-j\vert$, $\vert i-k\vert$, $\vert i-\alpha\vert$ and $\vert j-\alpha\vert$ converge to $\vert i-j\vert/2$ and $\vert k-\alpha\vert=\frac{1}{n^2}|(0,1)|$, which implies that
	\begin{align*}&\int_{\R^2}|x-\alpha|^\rho\mu(dx)=p|i-\alpha|^\rho+ q|j-\alpha|^\rho+ r|k-\alpha|^\rho\sim \frac{|i-j|^\rho}{2^\rho n}\\
	&\int_{\R^2\times\R^2}|x-y|^\rho\mu(dx)\mu(dy)=2\left(pq\vert j-i\vert^\rho+qr\vert k-j\vert^\rho+rp\vert i-k\vert^\rho\right)\sim \frac{|i-j|^\rho}{2^{\rho-1} n}.
	\end{align*}
	By taking the limit $n\to\infty$ in the last inequality, we conclude that $
	C_{\rho,2}\ge \1_{\{\rho=1\}}+2$, whatever the norm $\R^2$ is endowed with.  \end{Ex}

\section{Technical lemmas}
\label{sec:Lemmas}
This section is devoted to the statements and proofs of technical lemmas needed earlier in the paper.

\begin{lemma}\label{existenceCouplageMartingaleOptimal} Let $d\in\N^*$, $\rho\ge1$ and $\mu,\nu\in\mathcal P_\rho(\R^d)$ be such that $\mu\le_{cx}\nu$. Let $c:\R^d\times\R^d\to\R_+$ be lower semicontinuous. Then the infimum
	\[
	C(\mu,\nu)=\inf_{M\in\Pi^{\textrm M}(\mu,\nu)}\int_{\R^d\times\R^d}c(x,y)\,M(dx,dy)
	\]
	is attained, i.e. there exists $M\in\Pi^{\textrm M}(\mu,\nu)$ such that $C(\mu,\nu)=\int_{\R^d\times\R^d}c(x,y)\,M(dx,dy)$.
\end{lemma}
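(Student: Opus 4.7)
The plan is to apply the direct method of the calculus of variations. First I would note that $\Pi^{\mathrm{M}}(\mu,\nu)$ is nonempty by Strassen's theorem, since $\mu\le_{cx}\nu$, so $C(\mu,\nu)<+\infty$ (at least, $C(\mu,\nu)$ is well-defined in $[0,+\infty]$; it is bounded above by $\int c\,dM$ for any fixed martingale coupling $M$, but we do not need finiteness for the argument). Pick a minimising sequence $(M_n)_{n\in\N}\subset\Pi^{\mathrm{M}}(\mu,\nu)$ with $\int_{\R^d\times\R^d}c\,dM_n\to C(\mu,\nu)$.

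Next I would establish tightness of $(M_n)_{n\in\N}$. Since every $M_n$ has marginals $\mu$ and $\nu$, for $\epsilon>0$ one can choose compacts $K_1,K_2\subset\R^d$ with $\mu(K_1^c)<\epsilon/2$ and $\nu(K_2^c)<\epsilon/2$, and then $M_n((K_1\times K_2)^c)\le \mu(K_1^c)+\nu(K_2^c)<\epsilon$ for every $n$. By Prokhorov's theorem we may extract a subsequence (still denoted $(M_n)$) converging weakly to some $M^\star\in\mathcal P(\R^d\times\R^d)$.

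The key step, and main potential obstacle, is to show $M^\star\in\Pi^{\mathrm{M}}(\mu,\nu)$. The coupling property $M^\star\in\Pi(\mu,\nu)$ follows immediately by testing against bounded continuous functions that depend only on $x$ (resp.\ on $y$). For the martingale property, I would use the equivalent characterization: $M\in\Pi(\mu,\nu)$ lies in $\Pi^{\mathrm{M}}(\mu,\nu)$ iff for every bounded continuous $\varphi:\R^d\to\R$,
\[
\int_{\R^d\times\R^d}\varphi(x)(y-x)\,M(dx,dy)=0,
\]
which makes sense because $\mu,\nu\in\mathcal P_1(\R^d)$. Writing this as $\int\varphi(x)y\,M(dx,dy)=\int\varphi(x)x\,\mu(dx)$, the right-hand side is fixed along the sequence. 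To pass to the limit on the left-hand side with $(x,y)\mapsto\varphi(x)y$, which is continuous but unbounded in $y$, I would invoke uniform integrability of $|y|$ under $(M_n)$: since each $M_n$ has second marginal $\nu$ and $\int|y|\,\nu(dy)<\infty$, one has $\sup_n\int_{\{|y|>R\}}|y|\,M_n(dx,dy)=\int_{\{|y|>R\}}|y|\,\nu(dy)\to 0$ as $R\to\infty$. Combined with tightness and weak convergence, this yields $\int\varphi(x)y\,M_n\to\int\varphi(x)y\,M^\star$, so $M^\star$ inherits the martingale property.

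Finally, since $c:\R^d\times\R^d\to\R_+$ is nonnegative and lower semicontinuous, the functional $M\mapsto\int c\,dM$ is lower semicontinuous with respect to weak convergence on $\mathcal P(\R^d\times\R^d)$ (standard Portmanteau-type result, e.g.\ Villani \cite{Vi1}, Lemma 4.3). Hence
\[
\int_{\R^d\times\R^d}c\,dM^\star\le\liminf_{n\to\infty}\int_{\R^d\times\R^d}c\,dM_n=C(\mu,\nu),
\]
and since $M^\star\in\Pi^{\mathrm{M}}(\mu,\nu)$ the reverse inequality holds by definition, giving equality and the desired attainment.
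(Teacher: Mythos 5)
Your proposal is correct and follows essentially the same route as the paper: a minimising sequence, tightness from the fixed marginals, Prokhorov extraction, passage to the limit in the martingale constraint via uniform integrability coming from $\mu,\nu\in\mathcal P_1(\R^d)$, and lower semicontinuity of $M\mapsto\int c\,dM$ for nonnegative l.s.c.\ $c$ under weak convergence. The only cosmetic differences (testing $\varphi(x)y$ with uniform integrability of $|y|$ under the fixed second marginal, rather than the paper's uniformly integrable sequence $(X_n,Y_n)$ with the linearly growing test function $f(x)(y-x)$) are equivalent ways of justifying the same limit.
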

\begin{proof} Let $M_n\in\Pi^{\textrm M}(\mu,\nu)$, $n\in\N$ be a sequence of martingale couplings between $\mu$ and $\nu$ such that
	\[
	\int_{\R^d\times\R^d}c(x,y)\,M_n(dx,dy)\underset{n\to+\infty}{\longrightarrow}C(\mu,\nu).
	\]
	
	The probability measures $\mu$ and $\nu$ are tight: for all $\varepsilon>0$ there exists a compact subset $K\subset\R^d$ such that $\mu(K)\ge1-\varepsilon$ and $\nu(K)\ge1-\varepsilon$. Therefore, for all $n\in\N$,
	\[
	M_n((K\times K)^\complement)\le M_n((K^\complement\times\R^d)\cup(\R^d\times K^\complement))\le\mu(K^\complement)+\nu(K^\complement)\le2\varepsilon.
	\]
	
	We deduce that $(M_n)_{n\in\N}$ is tight. By Prokhorov's theorem, there exists an increasing map $\varphi:\N\to\N$ such that $(M_{\varphi(n)})_{n\in\N}$ converges weakly towards $M\in\mathcal P(\R^d\times\R^d)$. Since the projections maps $(x,y)\mapsto x$ and $(x,y)\mapsto y$ are continuous, the respective marginals of $M_{\varphi(n)}$ converge to the respective marginals of $M$. Since for all $n\in\N$, $M_{\varphi(n)}$ has marginals $\mu$ and $\nu$, so does $M$, hence $M\in\Pi(\mu,\nu)$. Moreover, for all $n\in\N$ let $(X_n,Y_n)$ be a bivariate random variable distributed according to $M_n$ and $(X,Y)$ be distributed according to $M$. Since $\mu$ and $\nu$ belong to $\mathcal P_1(\R^d)$, $((X_n,Y_n))_{n\in\N}$ is uniformly integrable. Let $f:\R^d\to\R$ be a continuous and bounded map. Since $(X_{\varphi(n)},Y_{\varphi(n)})_{n\in\N}$ converges in distribution to $(X,Y)$, is uniformly integrable and $(x,y)\mapsto f(x)(y-x)$ is continuous with at most linear growth, we have
	\begin{align*}
	0&=\int_{\R^d\times\R^d}f(x)(y-x)\,M_{\varphi(n)}(dx,dy)=\E[f(X_{\varphi(n)})(Y_{\varphi(n)}-X_{\varphi(n)})]\\
	&\underset{n\to+\infty}{\longrightarrow}\E[f(X)(Y-X)]=\int_{\R^d\times\R^d}f(x)(y-x)\,M(dx,dy).
	\end{align*}
	
	We deduce that $M\in\Pi^{\textrm M}(\mu,\nu)$. Then by the Portmanteau theorem, we get
	\[
	C(\mu,\nu)\le\int_{\R^d\times\R^d}c(x,y)\,M(dx,dy)\le\liminf_{n\to+\infty}\int_{\R^d\times\R^d}c(x,y)\, M_{\varphi(n)}(dx,dy)=C(\mu,\nu),
	\]
	so $M$ is optimal for $C(\mu,\nu)$.
\end{proof}
\begin{lemma}\label{measCouplageMartingaleOptimal} Let $\rho\ge1$ and $\widetilde\Pi_\rho=\{(q,q')\in\mathcal P_\rho(\R)\times\mathcal P_\rho(\R)\mid q\le_{cx}q'\}$. There exists a measurable map $M_\star:\widetilde\Pi_\rho\to {\mathcal P}(\R\times\R)$ such that for each $(\mu,\nu)\in\widetilde\Pi_\rho$, $M_\star(\mu,\nu)\in{\rm Opt}_\rho(\mu,\nu):=\{M\in\Pi^{\textrm M}(\mu,\nu):{\cal M}_\rho^\rho(\mu,\nu)=\int_{\R\times\R}|x-y|^\rho\,M(dx,dy)\}$, where $\widetilde\Pi_\rho$ is endowed with the trace of $\mathcal B\otimes\mathcal B$ with $\mathcal B$ denoting the Borel $\sigma$-algebra of $\mathcal P_\rho(\R)$ endowed with the weak convergence topology and ${\mathcal P}(\R\times\R)$ with the Borel $\sigma$-algebra associated with the weak convergence topology.
\end{lemma}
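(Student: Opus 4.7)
The plan is to apply the Kuratowski--Ryll-Nardzewski measurable selection theorem to the multifunction $F\colon\widetilde\Pi_\rho\to 2^{\mathcal P(\R\times\R)}$ defined by $F(\mu,\nu)={\rm Opt}_\rho(\mu,\nu)$. Since $\mathcal P(\R\times\R)$ endowed with the weak convergence topology is Polish and $\widetilde\Pi_\rho$ is a standard Borel space (as a Borel subset of $\mathcal P_\rho(\R)^2$ with the weak topology), it is enough to check that $F$ has non-empty closed values and Borel graph.

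Non-emptiness of $F(\mu,\nu)$ is exactly Lemma \ref{existenceCouplageMartingaleOptimal} applied with the lower semicontinuous cost $c(x,y)=|x-y|^\rho$. For closedness, I would first recall that $\Pi(\mu,\nu)$ is weakly compact by Prokhorov's theorem. The martingale identity $\int\phi(x)(y-x)\,M(dx,dy)=0$, $\phi\in C_b(\R)$, is preserved under weak limits inside $\Pi(\mu,\nu)$: the uniform integrability of $(x,y)\mapsto|x|+|y|$ furnished by the fixed $\mathcal P_1(\R)$-marginals controls the truncation error, in the very same way as in the proof of Lemma \ref{existenceCouplageMartingaleOptimal}. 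Hence $\Pi^{\mathrm M}(\mu,\nu)$ is weakly compact, and since the cost $J\colon M\mapsto\int|x-y|^\rho\,M(dx,dy)$ is weakly lower semicontinuous, its argmin set $F(\mu,\nu)$ is a closed (and compact) subset of $\Pi^{\mathrm M}(\mu,\nu)$.

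For the Borel status of the graph, I would first handle the piece $\{(\mu,\nu,M)\colon M\in\Pi^{\mathrm M}(\mu,\nu)\}$: the marginal conditions define a closed set via the weak continuity of the marginal projections, while the martingale condition writes as a countable intersection (over a countable dense subset of $C_b(\R)$) of Borel conditions, each being a monotone limit as the truncation level of $(y-x)$ grows of continuous bounded integrals against $M$. The remaining requirement that $\int|x-y|^\rho\,M(dx,dy)=v(\mu,\nu)$, with $v(\mu,\nu)=\mathcal M_\rho^\rho(\mu,\nu)$, reduces to the Borel measurability of $v$ on $\widetilde\Pi_\rho$. This I would derive from the lower semicontinuity of $v$ for the $\mathcal W_\rho$ topology (same compactness and l.s.c.\ scheme as above) and from the coincidence of the Borel $\sigma$-algebras generated by the $\mathcal W_\rho$ topology and by the weak convergence topology on $\mathcal P_\rho(\R)$. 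Once this Borel structure is in place, Kuratowski--Ryll-Nardzewski supplies the desired selector $M_\star$.

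The main technical obstacle is precisely the joint Borel measurability of the martingale constraint and of the value function $v$ in $(\mu,\nu)$: both hinge on truncation arguments tailored to the unboundedness of the integrands $(y-x)\phi(x)$ and $|x-y|^\rho$, whereas the rest of the argument is routine weak-compactness and lower-semicontinuity machinery already exploited in Lemma \ref{existenceCouplageMartingaleOptimal}.
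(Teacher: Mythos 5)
Your route is genuinely different from the paper's. The paper's proof hinges on the \emph{continuity} of $(\mu,\nu)\mapsto\mathcal M_\rho^\rho(\mu,\nu)$ on $\widetilde\Pi_\rho$ (Corollary 1.2 of \cite{BaPa19}), which is precisely why the lemma is stated only for $d=1$: continuity makes $\bigcup_{(q,q')\in\widetilde\Pi_\rho}{\rm Opt}_\rho(q,q')$ a closed, hence Polish, subset of $(\mathcal P_\rho(\R\times\R),\mathcal W_\rho)$, and the selection is then obtained as a measurable right-inverse of the continuous marginal-projection map, with the $\sigma$-algebra identifications supplied by Lemma \ref{measurabilityWrho}. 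You instead only use \emph{lower} semicontinuity of the value function $v=\mathcal M_\rho^\rho$ (which indeed follows from the same tightness/uniform-integrability/l.s.c.\ scheme as in Lemma \ref{existenceCouplageMartingaleOptimal}) together with Borel measurability of the graph of $(\mu,\nu)\mapsto{\rm Opt}_\rho(\mu,\nu)$. That is an interesting trade: you avoid importing the one-dimensional stability theorem of \cite{BaPa19} altogether, and your argument would therefore extend verbatim to $\R^d$, whereas the paper's cannot for $\rho\in[1,2)$; the price is heavier descriptive-set-theoretic machinery, and this is where your write-up has a genuine gap.

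The gap is the selection step: Kuratowski--Ryll-Nardzewski requires the multifunction $F$ to be \emph{weakly measurable} (preimages of open sets measurable), and ``non-empty closed values plus Borel graph'' does not imply this; with merely closed values, a Borel graph only yields universally (analytically) measurable selections via Jankov--von Neumann, not a Borel one. Moreover, without \emph{upper} semicontinuity of $v$ — which is exactly the ingredient the paper takes from \cite{BaPa19} — you cannot establish weak measurability of $F$ by the usual closed-graph/upper-hemicontinuity argument, since a weak limit of optimizers for $(\mu_n,\nu_n)$ is only known to satisfy $\int|x-y|^\rho\,dM\le\liminf_n v(\mu_n,\nu_n)$, which need not be $\le v(\mu,\nu)$ when $v$ is merely l.s.c. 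The fix is available because you have already shown that the values ${\rm Opt}_\rho(\mu,\nu)$ are \emph{compact} (closed subsets of the weakly compact $\Pi(\mu,\nu)$, by l.s.c.\ of the cost): a Borel graph with ($\sigma$-)compact sections admits a Borel uniformization by the Arsenin--Kunugui/Novikov--Saint-Raymond theorem, which directly provides the Borel selector $M_\star$ (equivalently, Novikov's projection theorem gives weak measurability of $F$, after which Kuratowski--Ryll-Nardzewski applies). With that substitution — and keeping your appeal to Lemma \ref{measurabilityWrho} to identify the $\mathcal W_\rho$- and weak Borel $\sigma$-algebras on $\mathcal P_\rho(\R)$ so that l.s.c.\ of $v$ for $\mathcal W_\rho$ yields $\mathcal B\otimes\mathcal B$-measurability — your proof goes through.
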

The reason why this statement is retricted to $d=1$ is that the proof relies on the continuity of ${\cal M}_\rho$ with respect to its marginals which fails in higher dimension for $\rho\in[1,2)$ according to Example \ref{exbj} taken from \cite{BJ}.
\begin{proof}The reasoning is inspired from the proof of Corollary 5.22 \cite{Vi09}.
	The set $\widetilde\Pi_\rho$ is a closed subset of the Polish space $\mathcal P_\rho(\R)\times\mathcal P_\rho(\R)$ endowed with the product of the ${\cal W}_\rho$ topology. Therefore it is Polish.
	The set $\bigcup_{(q,q')\in\widetilde\Pi_\rho}\Pi^M(q,q')$ is a closed subset of the set ${\cal P}_\rho(\R\times\R)$ endowed with ${\cal W}_\rho$ where the map $M\mapsto \int_{\R\times\R}|x-y|^\rho M(dx,dy)$ is continuous. Since $\widetilde\Pi_\rho\ni (\mu,\nu)\mapsto {\cal M}_\rho^\rho(\mu,\nu)$ is continuous according to Corollary 1.2 \cite{BaPa19}, we deduce that the  set $\bigcup_{(q,q')\in\widetilde\Pi_\rho}{\rm Opt}_\rho(q,q')$ is Polish as a closed subset of the Polish space ${\cal P}_\rho(\R\times\R)$ endowed with ${\cal W}_\rho$. For each $(\mu,\nu)\in\widetilde\Pi_\rho$, ${\rm Opt}_\rho(\mu,\nu)$ is non-empty and compact for the ${\cal W}_\rho$ topology.  The map $$\bigcup_{(q,q')\in\widetilde\Pi_\rho}{\rm Opt}_\rho(q,q')\ni M(dx,dy)\mapsto (M(dx,\R^d),M(\R^d,dy))\in\widetilde\Pi_\rho$$ is onto and continuous (and therefore measurable). The measurable selection theorem implies that it admits a measurable right-inverse $\widetilde\Pi_\rho\ni(\mu,\nu)\mapsto M_\star(\mu,\nu)\in \bigcup_{(q,q')\in\widetilde\Pi_\rho}{\rm Opt}_\rho(q,q')$ such that for each $(\mu,\nu)\in\widetilde\Pi_\rho$, $M_\star(\mu,\nu)\in {\rm Opt}_\rho(\mu,\nu)$. Since ${\mathcal P}_\rho(\R\times\R)$ is a closed subset of ${\mathcal P}(\R\times\R)$ for the weak convergence topology and the Borel $\sigma$-algebras on ${\mathcal P}_\rho(\R\times\R)$ associated to the ${\cal W}_\rho$ and the weak convergence topologies coincide according to the next lemma, the map is still measurable if we consider ${\cal P}(\R\times\R)$ endowed with the Borel $\sigma$-algebra associated with the weak convergence topology as the codomain. The Borel $\sigma$-algebra on ${\mathcal P}_\rho(\R)$ endowed with ${\cal W}_\rho$ coincides with ${\mathcal B}$ according to the next lemma. Since ${\cal P}_\rho(\R)$ endowed with ${\cal W}_\rho$ is Polish, the Borel $\sigma$-algebra on the product space ${\cal P}_\rho(\R)\times {\cal P}_\rho(\R)$ thus coincides with ${\mathcal B}\otimes{\mathcal B}$, which concludes the proof.\end{proof}
\begin{lemma} \label{measurabilityWrho}Let $(E,d_E)$ be a Polish space, $\rho\ge1$ and $\mathcal P_\rho(E)$ be the set of probability measures on $E$ with finite $\rho$-th moment. Let $\mathcal B$ , resp. $\mathcal B_\rho$ be the Borel $\sigma$-algebra on $\mathcal P_\rho(E)$ with respect to the weak convergence topology, resp. the $\mathcal W_\rho$-distance topology. Then $\mathcal B=\mathcal B_\rho$.
\end{lemma}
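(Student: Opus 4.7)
The plan is to establish the two inclusions $\mathcal B\subset\mathcal B_\rho$ and $\mathcal B_\rho\subset\mathcal B$ separately. The first inclusion is routine: since $\mathcal W_\rho$-convergence of a sequence $(\mu_n)_{n\in\N}$ in $\mathcal P_\rho(E)$ to a limit $\mu\in\mathcal P_\rho(E)$ implies convergence of $\int f\,d\mu_n$ to $\int f\,d\mu$ for every bounded continuous $f:E\to\R$, the identity map from $(\mathcal P_\rho(E),\mathcal W_\rho)$ to $(\mathcal P_\rho(E),\text{weak})$ is continuous. Hence the weak topology is coarser than the $\mathcal W_\rho$-topology, so $\mathcal B\subset\mathcal B_\rho$.

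For the converse, I would rely on the standard lower semicontinuity of the Wasserstein distance with respect to weak convergence (see e.g.~\cite[Remark 6.12]{Vi09}): the map $(\mu,\nu)\mapsto\mathcal W_\rho(\mu,\nu)$ is lower semicontinuous on $\mathcal P(E)\times\mathcal P(E)$ endowed with the weak convergence topology (with value $+\infty$ outside $\mathcal P_\rho(E)\times\mathcal P_\rho(E)$). Fixing $\mu_0\in\mathcal P_\rho(E)$, this implies that $\mu\mapsto\mathcal W_\rho(\mu,\mu_0)$ restricted to $\mathcal P_\rho(E)$ is lower semicontinuous for the weak topology, hence $\mathcal B$-measurable. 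Therefore every closed $\mathcal W_\rho$-ball $\{\mu\in\mathcal P_\rho(E):\mathcal W_\rho(\mu,\mu_0)\le r\}$ belongs to $\mathcal B$, and so does every open ball by writing it as a countable union of such closed balls with slightly smaller radii.

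Since $E$ is Polish, $(\mathcal P_\rho(E),\mathcal W_\rho)$ is separable, so every $\mathcal W_\rho$-open set can be written as a countable union of open $\mathcal W_\rho$-balls centred at points of a countable dense subset. Each such ball lies in $\mathcal B$ by the previous step, hence every $\mathcal W_\rho$-open set lies in $\mathcal B$, giving $\mathcal B_\rho\subset\mathcal B$ and concluding the proof.

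The only non-trivial input is the lower semicontinuity of $\mathcal W_\rho$ with respect to weak convergence, which is a classical result. An alternative would be to invoke the Lusin--Souslin (or Kuratowski) theorem applied to the continuous injection from the Polish space $(\mathcal P_\rho(E),\mathcal W_\rho)$ into the Polish space $(\mathcal P(E),\text{weak})$, together with the fact that $\mathcal P_\rho(E)$ is a Borel subset of $\mathcal P(E)$ for the weak topology, but the direct argument above is more elementary.
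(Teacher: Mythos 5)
Your proof is correct, but it takes a genuinely different route from the paper's. The paper identifies the $\mathcal W_\rho$-topology as the initial topology generated by the maps $p\mapsto\int_E f\,dp$ for continuous $f$ growing at most like $d_E^\rho(\cdot,x_0)$, uses that $(\mathcal P_\rho(E),\mathcal W_\rho)$ is Polish (hence strongly Lindelöf) to reduce arbitrary unions of subbasic open sets to countable ones, and then shows each set $\{p:\int_E f\,dp\in(a,b)\}$ lies in $\mathcal B$ by truncating $f$ at level $n$ and expressing that set as a countable union/intersection of the corresponding sets for the bounded truncations. You instead take as key input the lower semicontinuity of $\mathcal W_\rho$ with respect to weak convergence, which makes $\mu\mapsto\mathcal W_\rho(\mu,\mu_0)$ $\mathcal B$-measurable, so every $\mathcal W_\rho$-ball lies in $\mathcal B$; combined with separability of $(\mathcal P_\rho(E),\mathcal W_\rho)$ (the same Theorem 6.18 of \cite{Vi09} the paper invokes for the Lindelöf property), every $\mathcal W_\rho$-open set is a countable union of such balls and hence belongs to $\mathcal B$. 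Your route is shorter and avoids both the dual description of the topology and the truncation step, at the price of importing the (classical) lower semicontinuity of the transport cost under weak convergence, whereas the paper's argument is more self-contained, resting only on dominated convergence and elementary facts about weak topologies. One small point to tighten: to write a $\mathcal W_\rho$-open set as a \emph{countable} union of balls you should take centres in a countable dense set \emph{and} rational radii (dense centres alone give an uncountable family of balls); this is immediate and does not affect the validity of the argument.
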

\begin{proof} Since the $\mathcal W_\rho$-distance topology is finer than the weak convergence topology, we clearly have $\mathcal B\subset\mathcal B_\rho$. Therefore it remains to prove that $\mathcal B_\rho\subset\mathcal B$.
	
	Let $x_0\in E$ and $\Phi_\rho(E)$ be the set of all real-valued continuous functions $f$ on $E$ which satisfy the growth constraint
	\[
	\exists\alpha>0,\quad\forall x\in E,\quad\vert f(x)\vert\le\alpha(1+d_E^\rho(x,x_0)).
	\]
	
	For all $f\in\Phi_\rho(E)$, let $\widetilde f:\mathcal P_\rho(E)\to\R$ be the map defined for all $p\in\mathcal P_\rho(E)$ by $\widetilde f(p)=\int_Ef(x)\,p(dx)$. The $\mathcal W_\rho$-distance topology is then the weak topology on $\mathcal P_\rho(E)$ induced by the family $(\widetilde f)_{f\in\Phi_\rho(E)}$, that is the coarsest topology on $\mathcal P_\rho(E)$ for which $\widetilde f$ is continuous for all $f\in\Phi_\rho(E)$. Any open set for this topology is a union of finitely many intersections of sets of the form $\widetilde f^{-1}(U)$ where $f\in\Phi_\rho(E)$ and $U$ is an open subset of $\R$. On the one hand, $(\mathcal P_\rho(E),\mathcal W_\rho)$ is Polish \cite[Theorem 6.18]{Vi09} and therefore strongly Lindelöf, hence the latter union can be assumed at most countable. On the other hand, any open subset of $\R$ is an at most countable union of open intervals of $\R$. We deduce that any open set for the $\mathcal W_\rho$-distance topology is an at most countable union of finitely many intersections of at most countable unions of sets of the form $\widetilde f^{-1}((a,b))$ where $f\in\Phi_\rho(E)$ and $(a,b)\subset\R$. Since $\mathcal B$ is closed under countable unions and intersections, it suffices to show that every set of the form $\widetilde f^{-1}((a,b))$ belongs to $\mathcal B$ to conclude that any open set of the $\mathcal W_\rho$-distance topology belongs to $\mathcal B$ and therefore $\mathcal B_\rho\subset\mathcal B$.
	
	Let then $f\in\Phi_\rho(E)$ and $a,b\in\R$ be such that $a<b$ and let us show that $\widetilde f^{-1}((a,b))\in\mathcal B$, which will end the proof. For all $n\in\N$, let
	
	\[
	f_n:x\mapsto (f(x)\vee(-n))\wedge n,
	\]
	which is clearly continuous and bounded. Then for all $n\in\N$ and $p\in\mathcal P_\rho(E)$,
	\[
	\widetilde f_n(p)=\int_X ((f(x)\vee(-n))\wedge n)\,p(dx),
	\]
	which by the dominated convergence theorem converges to $\widetilde f(p)$ as $n\to+\infty$, hence
	\[
	\widetilde f^{-1}((a,b))=\bigcup_{k\in\N^*}\bigcup_{N\in\N}\bigcap_{n\ge N}\widetilde f_n^{-1}\left(\left(a+\frac1k,b-\frac1k\right)\right).
	\]
	
	Since the weak convergence topology is induced by the family of $\widetilde g$ for $g$ continuous and bounded, we have that $\widetilde f_n^{-1}((a,b))\in\mathcal B$ for all $n\in\N$, hence $\widetilde f^{-1}((a,b))\in\mathcal B$.
\end{proof}

\begin{proof}[Proof of Lemma \ref{lemmaProjectionNorme}] Let $a=(a_1,\cdots,a_d)\in\mathbb S^{d-1}$, $c=(c_1,\cdots,c_d)\in\R^d$, $y\in\operatorname{Span}(a)$ and $t\in\R$ be such that $y=t a$. Suppose first that $r=+\infty$. Then
	\begin{align*}
	\vert y-c_a\vert&=\vert t a-c_i\operatorname{sgn}(a_i)a\vert=\vert t-c_i\operatorname{sgn}(a_i)\vert=\vert t\vert a_i\vert-c_i\operatorname{sgn}(a_i)\vert=\vert(ta_i-c_i)\operatorname{sgn}(a_i)\vert\\
	&=\vert ta_i-c_i\vert\le\vert t a-c\vert=\vert y-c\vert.
	\end{align*}
	
	Suppose now that $r<+\infty$. Using the fact that $\vert a\vert=1$ for the second and third equalities, Hölder's inequality for the second inequality and the fact that $\vert\operatorname{sgn}(x)\vert=1$ for all $x\in\R$ for the last but one equality, we get
	\begin{align*}
	\vert y-c_a\vert&=\left\vert t a-\left(\sum_{i=1}^dc_i\operatorname{sgn}(a_i)\vert a_i\vert^{r-1}\right)a\right\vert\\
	&=\left\vert t\sum_{i=1}^d\vert a_i\vert^r-\sum_{i=1}^dc_i\operatorname{sgn}(a_i)\vert a_i\vert^{r-1}\right\vert\\
	&\le\sum_{i=1}^d\vert t\vert a_i\vert-c_i\operatorname{sgn}(a_i)\vert\vert a_i\vert^{r-1}\\
	&\le\left(\sum_{i=1}^d\vert t\vert a_i\vert-c_i\operatorname{sgn}(a_i)\vert^r\right)^{1/r}\left(\sum_{i=1}^d\vert a_i\vert^r\right)^{(r-1)/r}\\
	&=\left(\sum_{i=1}^d\vert(t a_i-c_i)\operatorname{sgn}(a_i)\vert^r\right)^{1/r}\\
	&=\vert t a-c\vert=\vert y-c\vert.
	\end{align*}
\end{proof}

\begin{lemma}\label{valeurW1scaling} Let $d\in\N^*$, $\R^d$ be endowed with any norm, $\rho\ge1$, $\lambda\ge0$, $\mu\in\mathcal P_\rho(\R^d)$ and $\alpha\in\R^d$. Let $\nu$ be the image of $\mu$ by the map $x\mapsto x+\lambda(x-\alpha)$. Then
	\begin{equation}\label{expressionW1ScalingConstantSpeedGeodesic}
	\mathcal W_\rho(\mu,\nu)=\lambda\left(\int_{\R^d}\vert x-\alpha\vert^\rho\,\mu(dx)\right)^{1/\rho}.
	\end{equation}
\end{lemma}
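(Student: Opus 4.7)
The plan is to prove the equality by showing the two complementary inequalities separately, each with a one-line argument.

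For the upper bound, I would exhibit the deterministic coupling $\pi_0 = (\mathrm{id},T)_\sharp\mu$ where $T(x)=x+\lambda(x-\alpha)$. Since $T_\sharp\mu=\nu$ by hypothesis, $\pi_0\in\Pi(\mu,\nu)$, and therefore
\[
\mathcal W_\rho^\rho(\mu,\nu)\le\int_{\R^d}|x-T(x)|^\rho\,\mu(dx)=\int_{\R^d}|\lambda(x-\alpha)|^\rho\,\mu(dx)=\lambda^\rho\int_{\R^d}|x-\alpha|^\rho\,\mu(dx),
\]
which gives $\mathcal W_\rho(\mu,\nu)\le\lambda\bigl(\int|x-\alpha|^\rho\,\mu(dx)\bigr)^{1/\rho}$.

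For the lower bound, I would apply Minkowski's inequality for $L^\rho$-norms of $\R^d$-valued random variables. Let $\pi\in\Pi(\mu,\nu)$ be arbitrary and $(X,Y)\sim\pi$. Writing $Y-\alpha=(Y-X)+(X-\alpha)$ and applying Minkowski in $L^\rho(\pi)$ gives
\[
\bigl(\mathbb E|Y-\alpha|^\rho\bigr)^{1/\rho}\le\bigl(\mathbb E|Y-X|^\rho\bigr)^{1/\rho}+\bigl(\mathbb E|X-\alpha|^\rho\bigr)^{1/\rho}.
\]
The key point is that the first and third terms depend only on the marginals $\nu$ and $\mu$ respectively, not on $\pi$. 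Since $\nu=T_\sharp\mu$ with $T(x)-\alpha=(1+\lambda)(x-\alpha)$, we obtain
\[
\bigl(\int_{\R^d}|y-\alpha|^\rho\,\nu(dy)\bigr)^{1/\rho}=(1+\lambda)\bigl(\int_{\R^d}|x-\alpha|^\rho\,\mu(dx)\bigr)^{1/\rho}.
\]
Setting $M=\bigl(\int|x-\alpha|^\rho\,\mu(dx)\bigr)^{1/\rho}$, the Minkowski inequality rearranges to $\bigl(\int|x-y|^\rho\,\pi(dx,dy)\bigr)^{1/\rho}\ge\lambda M$. Taking the infimum over $\pi\in\Pi(\mu,\nu)$ yields the reverse inequality $\mathcal W_\rho(\mu,\nu)\ge\lambda M$.

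There is no real obstacle here: the argument works uniformly for every $\rho\ge1$, every norm on $\R^d$, and arbitrary $\alpha\in\R^d$ (not necessarily the mean of $\mu$), since Minkowski is valid for $L^\rho$-norms of norm-valued random variables. Note that the degenerate case $\lambda=0$ reduces trivially to $\nu=\mu$ and $\mathcal W_\rho(\mu,\mu)=0$, which matches the right-hand side.
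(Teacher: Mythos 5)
Your proof is correct and essentially the same as the paper's: the upper bound uses the identical deterministic coupling $\mu(dx)\,\delta_{x+\lambda(x-\alpha)}(dy)$, and your Minkowski argument in $L^\rho(\pi)$ is exactly what the paper's lower bound via the triangle inequality $\mathcal W_\rho(\delta_\alpha,\nu)\le\mathcal W_\rho(\delta_\alpha,\mu)+\mathcal W_\rho(\mu,\nu)$ unfolds to, since couplings with a Dirac mass are unique. The only cosmetic difference is that you apply Minkowski directly inside an arbitrary coupling rather than quoting the metric property of $\mathcal W_\rho$, which makes your version marginally more self-contained but not a different route.
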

\begin{rk} \label{rkcsg} Let $\eta_0,\eta_1\in\mathcal P_\rho(\R^d)$ and $\gamma\in\Pi(\eta_0,\eta_1)$ be optimal for $\mathcal W_\rho(\eta_0,\eta_1)$. For all $t\in[0,1]$, let $\eta_t$ be the image of $\gamma$ by $(x,y)\mapsto(1-t)x+ty$. It is well known that the curve $[0,1]\ni t\mapsto\eta_t$ is a constant speed geodesic in $(\mathcal P_\rho(\R^d),\mathcal W_\rho)$ connecting $\eta_0$ to $\eta_1$ \cite[Theorem 7.2.2]{AmGiSa}. Moreover, for all $0\le s\le t\le1$, the image of $\gamma$ by $((1-s)x+sy,(1-t)x+ty)$ is an optimal transport plan between $\eta_s$ and $\eta_t$ for the $\mathcal W_\rho$-distance.
	
	In particular for $\eta_0=\delta_\alpha$ and $\eta_1=\nu$, the unique coupling $\gamma(dx,dy)=\delta_\alpha(dx)\,\nu(dy)$ is optimal for $\mathcal W_\rho(\eta_0,\eta_1)$, and for $t=1/(1+\lambda)$, $\eta_t=\mu$. Therefore, the image of $\gamma$ by $(x,y)\mapsto((1-t)x+ty,y)$, that is the image of $\mu$ by $x\mapsto(x,x+\lambda(x-\alpha))$, is an optimal transport plan between $\mu$ and $\nu$ for the $\mathcal W_\rho$-distance, which implies \eqref{expressionW1ScalingConstantSpeedGeodesic}.
\end{rk}
We add here a quick proof with the central elements of Remark \ref{rkcsg}.
\begin{proof}[Proof of Lemma \ref{valeurW1scaling}] We have, by the triangle inequality for the metric $\mathcal W_\rho$,
	\begin{align*}
	\left(\int_{\R^d}\vert y-\alpha\vert^\rho\,\nu(dy)\right)^{1/\rho}&=\mathcal W_\rho(\delta_\alpha,\nu)\le \mathcal W_\rho(\delta_\alpha,\mu)+\mathcal W_\rho(\mu,\nu)\\
	&=\left(\int_{\R^d}\vert x-\alpha\vert^\rho\,\mu(dx)\right)^{1/\rho}+\mathcal W_\rho(\mu,\nu),
	\end{align*}
	so
	\[
	\mathcal W_\rho(\mu,\nu)\ge\left(\int_{\R^d}\vert y-\alpha\vert^\rho\,\nu(dy)\right)^{1/\rho}-\left(\int_{\R^d}\vert x-\alpha\vert^\rho\,\mu(dx)\right)^{1/\rho}=\lambda\left(\int_{\R^d}\vert x-\alpha\vert^\rho\,\mu(dx)\right)^{1/\rho}.
	\]
	
	Since $\mu(dx)\,\delta_{x+\lambda(x-\alpha)}(dy)$ is a coupling between $\mu$ and $\nu$, we also have
	\[
	\mathcal W_\rho(\mu,\nu)\le\lambda\left(\int_{\R^d}\vert x-\alpha\vert^\rho\,\mu(dx)\right)^{1/\rho},
	\]
	hence $\mathcal W_\rho(\mu,\nu)=\lambda\left(\int_{\R^d}\vert x-\alpha\vert^\rho\,\mu(dx)\right)^{1/\rho}$.
\end{proof}

\begin{lemma}\label{scalingCouplageMartingale} Let $d\in\N^*$, $\lambda>0$, $\mu\in\mathcal P_1(\R^d)$ and $\alpha\in\R^d$. Let $\nu$ be the image of $\mu$ by the map $x\mapsto x+\lambda(x-\alpha)$. Then $\mu\le_{cx}\nu$ iff $\alpha$ is the mean of $\mu$.
\end{lemma}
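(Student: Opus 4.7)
The plan is to handle the two implications separately, the first by direct moment computation and the second by exhibiting an explicit martingale kernel.

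For the direct implication, suppose $\mu\le_{cx}\nu$. Applying the definition \eqref{def:convexOrder} of the convex order to the two convex functions $x\mapsto \langle x,e_i\rangle$ and $x\mapsto -\langle x,e_i\rangle$ for each vector $e_i$ of a basis of $\R^d$ shows that $\mu$ and $\nu$ share the same mean. Since
\[
\int_{\R^d}y\,\nu(dy)=\int_{\R^d}\bigl(x+\lambda(x-\alpha)\bigr)\,\mu(dx)=(1+\lambda)\int_{\R^d}x\,\mu(dx)-\lambda\alpha,
\]
equating this to $\int_{\R^d}x\,\mu(dx)$ and using $\lambda>0$ yields $\alpha=\int_{\R^d}x\,\mu(dx)$.

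For the converse, assume $\alpha=\int_{\R^d}x\,\mu(dx)$. By Strassen's theorem it is enough to exhibit a martingale coupling between $\mu$ and $\nu$. The natural candidate is precisely the kernel $m$ from \eqref{defkerscal}, namely
\[
m(x,dy)=\frac{1}{1+\lambda}\,\delta_{x+\lambda(x-\alpha)}(dy)+\frac{\lambda}{1+\lambda}\,\nu(dy).
\]
The computation already carried out in the proof of Proposition \ref{scalingConstant3} shows that the second marginal of $\mu(dx)\,m(x,dy)$ is $\nu$. For the martingale property, one computes
\[
\int_{\R^d}y\,m(x,dy)=\frac{1}{1+\lambda}\bigl(x+\lambda(x-\alpha)\bigr)+\frac{\lambda}{1+\lambda}\int_{\R^d}y\,\nu(dy),
\]
and since the hypothesis on $\alpha$ together with the preceding computation gives $\int_{\R^d}y\,\nu(dy)=(1+\lambda)\alpha-\lambda\alpha=\alpha$, this reduces to $x$. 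Hence $\mu(dx)\,m(x,dy)\in\Pi^{\mathrm M}(\mu,\nu)$ and $\mu\le_{cx}\nu$.

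There is no real obstacle here: the forward implication is a one-line moment identity and the backward implication is an explicit verification for a kernel that has already appeared in the paper. The only point requiring minor care is to observe that the computation of $\int y\,\nu(dy)$ used in the martingale property relies on $\alpha$ being the mean of $\mu$, which is exactly the standing hypothesis of this direction.
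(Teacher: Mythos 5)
Your proof is correct, and the forward implication is exactly the paper's: apply the convex order to the linear functions $x\mapsto\pm\langle x,e_i\rangle$ to equate the means of $\mu$ and $\nu$, then solve for $\alpha$ using $\lambda>0$. For the converse you take a genuinely different (though closely related) route. You exhibit the explicit kernel $m$ of \eqref{defkerscal}, check that $\mu(dx)\,m(x,dy)$ has second marginal $\nu$ and satisfies the martingale property (which indeed uses $\int_{\R^d}y\,\nu(dy)=(1+\lambda)\alpha-\lambda\alpha=\alpha$, available precisely under the standing hypothesis), and conclude $\mu\le_{cx}\nu$ from the Strassen equivalence recalled in the introduction. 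The paper instead verifies \eqref{def:convexOrder} directly: it writes $x=\frac{\lambda}{1+\lambda}\alpha+\frac{1}{1+\lambda}\bigl(x+\lambda(x-\alpha)\bigr)$, uses convexity of $f$, and then Jensen's inequality in the form $f(\alpha)=f\bigl(\int_{\R^d}y\,\nu(dy)\bigr)\le\int_{\R^d}f(y)\,\nu(dy)$. The two arguments are essentially dual: integrating a convex $f$ against your kernel $m$ reproduces exactly the paper's chain of inequalities. Your version buys economy, recycling the coupling already constructed for Proposition \ref{scalingConstant3} and making the martingale structure explicit; the paper's version buys self-containedness, needing only convexity and Jensen, whereas you invoke Strassen's theorem --- though only its elementary direction (a martingale coupling implies convex order, i.e. conditional Jensen), so nothing heavy or circular is used. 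Either way the argument is complete.
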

\begin{proof} If $\mu\le_{cx}\nu$, then $\mu$ and $\nu$ have the same mean, so
	\[
	\int_{\R^d}x\,\mu(dx)=\int_{\R^d}y\,\nu(dy)=\int_{\R^d}x\,\mu(dx)+\lambda\left(\int_{\R^d}x\,\mu(dx)-\alpha\right),
	\]
	which implies that $\alpha=\int_{\R^d}x\,\mu(dx)$.
	
	Conversely, suppose that $\alpha=\int_{\R^d}x\,\mu(dx)$. Then $\alpha=\int_{\R^d}y\,\nu(dy)$ and for all convex function $f:\R^d\to\R$,
	\begin{align*}
	\int_{\R^d}f(x)\,\mu(dx)&=\int_{\R^d}f\left(\frac{\lambda}{1+\lambda}\alpha+\frac{1}{1+\lambda}(x+\lambda(x-\alpha))\right)\,\mu(dx)\\
	&\le\int_{\R^d}\left(\frac{\lambda}{1+\lambda}f(\alpha)+\frac{1}{1+\lambda}f(x+\lambda(x-\alpha))\right)\,\mu(dx)\\
	&=\frac{\lambda}{1+\lambda}f\left(\int_{\R^d}y\,\nu(dy)\right)+\frac{1}{1+\lambda}\int_{\R^d}f(y)\,\nu(dy)\\
	&\le\frac{\lambda}{1+\lambda}\int_{\R^d}f(y)\,\nu(dy)+\frac{1}{1+\lambda}\int_{\R^d}f(y)\,\nu(dy)\\
	&=\int_{\R^d}f(y)\,\nu(dy),
	\end{align*}
	where we used Jensen's inequality in the last inequality. We deduce that $\mu\le_{cx}\nu$.
\end{proof}

\section*{Acknowledgements}

We thank Nicolas Juillet for his remarks on a preliminary version of this paper and Nizar Touzi for his suggestion to investigate the martingale Wasserstein inequality for radial probability measures which lead to Proposition \ref{proprad}. We also thank the referees for their remarks that helped us to improve the paper.

	\bibliography{biblio}{}
	\bibliographystyle{abbrv}
\end{document}